\documentclass{amsart}

\usepackage[utf8x]{inputenc}
\usepackage[T1]{fontenc}

\usepackage{amsmath,amsfonts,amssymb,amsthm}

\usepackage{hyperref}

\usepackage{booktabs}

\usepackage{float}

\usepackage[shortlabels]{enumitem}

\usepackage[ruled,vlined]{algorithm2e}
    \DontPrintSemicolon
    \SetAlgoSkip{bigskip}

\theoremstyle{plain}   \newtheorem{theorem}{Theorem}[section]
\theoremstyle{plain}   \newtheorem{proposition}[theorem]{Proposition}
\theoremstyle{plain}   \newtheorem{corollary}[theorem]{Corollary}
\theoremstyle{plain}   \newtheorem{lemma}[theorem]{Lemma}
\theoremstyle{definition}   
\theoremstyle{definition}   \newtheorem{example}[theorem]{Example}

\newcommand{\nset}{\mathbb{N}}
\newcommand{\aset}{\mathcal{A}}
\newcommand{\anset}{\mathcal{A}_n}
\newcommand{\anmset}{\mathcal{A}_{n-1}}

\newcommand*{\psymb}{\mathrm{P}}

\newcommand*{\plac}{{\mathsf{plac}}}

\newcommand*{\hypo}{{\mathsf{hypo}}}
\newcommand*{\hypon}{{\mathsf{hypo}_n}}
\newcommand*{\hyponm}{{\mathsf{hypo}_{n-1}}}
\newcommand*{\hyponp}{{\mathsf{hypo}_{n+1}}}

\newcommand*{\sylv}{{\mathsf{sylv}}}


\usepackage{tikz}
\usetikzlibrary{arrows.meta,calc,decorations.pathmorphing,decorations.text,matrix,positioning}
\usetikzlibrary{shapes.geometric,shapes.misc,shapes.symbols}

%

\usetikzlibrary{matrix}

\tikzset{
  pretableaumatrix/.style={
    ampersand replacement=\&,
    matrix of math nodes,
    outer sep=1mm,
    inner sep=0mm,
    anchor=center,
    row sep={between borders,-\pgflinewidth},
    column sep={between borders,-\pgflinewidth},
    dottedentry/.style={densely dotted},
    spaceentry/.style={draw=none,execute at begin node=\null},
  },
  pretableaunode/.style={
    font=\small,
    draw=gray,
    sharp corners,
    rectangle,
    anchor=base,
    text height=3.75mm,
    text depth=1.25mm,
    minimum height=5mm,
    minimum width=5mm,
    inner sep=0mm,
    outer sep=0mm,
  },
  tableaumatrix/.style={
    pretableaumatrix,
    every node/.append style={
      pretableaunode,
    },
  },
  medtableaumatrix/.style={
    pretableaumatrix,
    every node/.append style={
      pretableaunode,
      font=\footnotesize,
      text height=2.75mm,
      text depth=.75mm,
      minimum height=3.5mm,
      minimum width=3.5mm
    },
  },
  smalltableaumatrix/.style={
    pretableaumatrix,
    every node/.append style={
      pretableaunode,
      font=\scriptsize,
      text height=1.85mm,
      text depth=.15mm,
      minimum height=2.5mm,
      minimum width=2.5mm,
    },
  },
  tinytableaumatrix/.style={
    pretableaumatrix,
    every node/.append style={
      pretableaunode,
      font=\tiny,
      text height=1.25mm,
      text depth=.15mm,
      minimum height=1.75mm,
      minimum width=1.75mm
    },
  },
  tableau/.style={
    baseline=-1.25mm,
    every matrix/.style={tableaumatrix},
  },
  medtableau/.style={
    baseline=-1.25mm,
    every matrix/.style={medtableaumatrix},
  },
  smalltableau/.style={
    baseline=-1.25mm,
    every matrix/.style={smalltableaumatrix},
  },
  preshapetableaumatrix/.style={
    pretableaumatrix,
    execute at end cell={\strut},
    every node/.append style={
      draw=black,
      anchor=base,
      inner sep=0mm,
      outer sep=0mm,
    },
    shadedentry/.style={fill=gray},
    darkshadedentry/.style={fill=darkgray},
  },
  medshapetableaumatrix/.style={
    preshapetableaumatrix,
    every node/.append style={
      text height=2.75mm,
      text depth=.75mm,
      minimum height=3.5mm,
      minimum width=3.5mm
    },
  },
  shapetableaumatrix/.style={
    ampersand replacement=\&,
    matrix of math nodes,
    outer sep=0mm,
    inner sep=0mm,
    anchor=base,
    row sep={between borders,-\pgflinewidth},
    column sep={between borders,-\pgflinewidth},
    execute at begin cell={\strut},
    every node/.append style={draw,anchor=base,text height=1mm,text depth=.5mm,minimum size=1.5mm,inner sep=0mm,outer sep=0mm},
  },
  shapetableau/.style={
    every matrix/.style={shapetableaumatrix},
  },
  topalign/.style={
    every matrix/.append style={name=maintableau,anchor=maintableau-1-1.base},
    baseline,
  },
}

\newcommand*\tableau[2][]{\tikz[tableau,#1]\matrix{#2};}

%

%

\usetikzlibrary{shapes.geometric} 
\usetikzlibrary{shapes.misc}  

\tikzset{
  bst/.style={
    standard/.style={
      font=\small,
      draw=gray,
      rounded rectangle,
      minimum width=4.5mm,
      minimum height=4.5mm,
      inner xsep=0mm,
      inner ysep=1mm,
      outer sep=0mm,
      line width=.5pt,
    },
    empty/.style={
      minimum width=3mm,
      minimum height=3mm,
    },
    triangle/.style={
      isosceles triangle,
      isosceles triangle apex angle=60,
      shape border rotate=90,
      rounded corners=2mm,
      minimum width=8mm,
      inner xsep=0mm,
      inner ysep=.5mm
    },
    blank/.style={
      draw=none,
    },
    nodecount/.style={
      blank,
      font=\scriptsize,
    },
    every node/.style={standard},
    every child/.style={draw=black,line width=.6pt},
    level distance=10mm,
    level 1/.style={sibling distance=60mm},
    level 2/.style={sibling distance=30mm},
    level 3/.style={sibling distance=15mm},
  },
  medbst/.style={
    bst,
    level distance=10mm,
    level 1/.style={sibling distance=15mm},
    level 2/.style={sibling distance=15mm},
    level 3/.style={sibling distance=15mm},
  },
  smallbst/.style={
    bst,
    level distance=8mm,
    level 1/.style={sibling distance=10mm},
    level 2/.style={sibling distance=10mm},
    level 3/.style={sibling distance=10mm},
  },
  tinybst/.style={
    bst,
    level distance=5mm,
    level 1/.style={sibling distance=8mm},
    level 2/.style={sibling distance=8mm},
    level 3/.style={sibling distance=8mm},
    every node/.append style={
      font=\footnotesize,
    },
    triangle/.append style={
      rounded corners=1mm,
      minimum width=7mm,
      inner xsep=-.5mm,
    },
  },
  microbst/.style={
    bst,
    standard/.append style={
      font=\scriptsize,
      minimum width=3mm,
      minimum height=3mm,
      inner ysep=.25mm,
    },
    level distance=3mm,
    level 1/.style={sibling distance=6mm},
    level 2/.style={sibling distance=6mm},
    level 3/.style={sibling distance=6mm},
  },
  nanobst/.style={
    bst,
    standard/.append style={
      font=\tiny,
      minimum width=2mm,
      minimum height=2mm,
      inner ysep=.25mm,
    },
    level distance=2mm,
    level 1/.style={sibling distance=4mm},
    level 2/.style={sibling distance=4mm},
    level 3/.style={sibling distance=4mm},
  },
}
%

\begin{document}

    \title{Identities and bases in the hypoplactic monoid}

    \author{Alan J. Cain}
    \address{
        Centro de Matem\'{a}tica e Aplica\c{c}\~{o}es\\
        Faculdade de Ci\^{e}ncias e Tecnologia\\
        Universidade Nova de Lisboa\\
        2829--516 Caparica\\
        Portugal
    }
    \email{a.cain@fct.unl.pt}

    \author{Ant\'{o}nio Malheiro}
    \address{
        Departamento de Matem\'{a}tica \& Centro de Matem\'{a}tica e Aplica\c{c}\~{o}es\\
        Faculdade de Ci\^{e}ncias e Tecnologia\\
        Universidade Nova de Lisboa\\
        2829--516 Caparica\\
        Portugal
    }
    \email{ajm@fct.unl.pt}

    \author{Duarte Ribeiro}
    \address{
        Departamento de Matem\'{a}tica \& Centro de Matem\'{a}tica e Aplica\c{c}\~{o}es\\
        Faculdade de Ci\^{e}ncias e Tecnologia\\
        Universidade Nova de Lisboa\\
        2829--516 Caparica\\
        Portugal
    }
    \email{dc.ribeiro@campus.fct.unl.pt}

    \thanks{
        This work is funded by National Funds through the FCT -- Funda\c{c}\~{a}o para a Ci\^{e}ncia e a Tecnologia, I.P., under the scope of the project {\scshape UIDB}/00297/2020 (Center for Mathematics and Applications) and the project {\scshape PTDC}/{\scshape MAT-PUR}/31174/2017.\\
        The third author is funded by National Funds through the FCT -- Funda\c{c}\~{a}o para a Ci\^{e}ncia e a Tecnologia, I.P., under the scope of the studentship {\scshape SFRH}/{\scshape BD}/138949/2018.
    }
    
    \subjclass[2020]{Primary 08B05; Secondary 05E99, 20M05, 20M07, 20M32}
    
    \keywords{Hypoplactic monoid, variety, identities, equational basis, axiomatic rank}
    
    \begin{abstract}
        This paper presents new results on the identities satisfied by the hypoplactic monoid. We show how to embed the hypoplactic monoid of any rank strictly greater than $2$ (including infinite rank) into a direct product of copies of the hypoplactic monoid of rank $2$. This confirms that all hypoplactic monoids of rank greater than or equal to $2$ satisfy exactly the same identities. We then give a complete characterization of those identities, and prove that the variety generated by the hypoplactic monoid has finite axiomatic rank, by giving a finite basis for it. 
    \end{abstract}
    
	\maketitle

	
	\section{Introduction}
    \label{section:introduction}
	
	
    A (non-trivial) identity is a formal equality $u \approx v$, where $u$ and $v$ are words over some alphabet of variables, which is not of the form $u \approx u$. If a monoid is known to satisfy an identity, an important question is whether the set of identities it satisfies is finitely based, that is, if all these identities are consequences of those in some finite subset (see \cite{sapir_combinatorial,volkov_finitebasis}).
    
    The plactic monoid $\plac$, also known as the monoid of Young tableaux, is an important algebraic structure, first studied by Schensted \cite{Schensted1961} and Knuth \cite{knuth1970}, and later studied in depth by Lascoux and Schützenberger \cite{LS1978}. It is connected to many different areas of Mathematics, such as algebraic combinatorics, symmetric functions \cite{macdonald_symmetric}, crystal bases \cite{bump_crystalbases} and representation theory \cite{fulton_young}. In particular, the question of identities in the plactic monoid has received a lot of attention recently \cite{kubat_identities,izhakian_tropical}. 
    
    Finitely-generated polynomial-growth groups are virtually nilpotent and so satisfy identities \cite{gromov_growth}. In \cite{shneerson_identities}, it was possible to construct examples of finitely-generated polynomial-growth semigroups that do not satisfy identities, but it would be interesting to have a natural class of monoids with this property. One of the initial motivations for the study of the plactic monoids of finite rank, in this context, was to obtain a more natural example, although it is now known that they all satisfy identities. For example, the plactic monoid of rank $2$ satisfies Adjan's identity $xyyxxyxyyx \approx xyyxyxxyyx$, the shortest non-trivial identity satisfied by the bicyclic monoid \cite{adjan_identity}. The plactic monoid of rank $3$ satisfies the identity $uvvuvu = uvuvvu$, where $u(x,y)$ and $v(x,y)$ are respectively the left and right side of Adjan’s identity \cite{kubat_identities}. However, it does not satisfy Adjan's identity itself. 
    
    On the other hand, the plactic monoid of rank $n$ does not satisfy any non-trivial identity of length less than or equal to $n$ \cite{ckkmo_placticidentity}. Thus, there is no ``global'' identity satisfied by the plactic monoid of every rank and, consequently, the infinite-rank plactic monoid. Recently, a tropical representation for the plactic monoid of rank $n$ has been constructed \cite{johnson_kambites_preprint}, which implies that every plactic monoid of finite rank satisfies a non-trivial identity.
    
    In terms of identities, there is some relationship between plactic monoids, tropical algebra, and the bicyclic monoid: by \cite{daviaud_identities}, the monoid of $2 \times 2$ upper-triangular tropical matrices (see, for example, \cite{maclagan_tropical}), the bicyclic monoid, and the plactic monoid of rank $2$ satisfy precisely the same identities. The bicyclic monoid is not finitely based \cite{shneerson_axiomaticrank}, so none of these monoids are. This can be shown by using polytopes to analyze identities holding in the bicyclic monoid \cite{pastijn_polyhedral}.

    Monoids related to the plactic monoid, such as the hypoplactic monoid $\hypo$ (the monoid of quasi-ribbon tableaux), sylvester monoid $\sylv$ (the monoid of binary search trees \cite{hivert_sylvester}), Baxter monoid (pairs of twin binary search trees, connected with Baxter permutations \cite{giraudo_baxter2}) satisfy identities, and the shortest identities have been characterized \cite{cm_identities}. These identities are independent of rank (except for rank $1$, since the monoids of rank $1$ are all isomorphic to the free monogenic monoid and thus commutative). Also related to the plactic monoid, by its growth type \cite{duchamp_placticgrowth}, the Chinese monoid \cite{chinese_monoid} embeds into a direct product of copies of the bicyclic monoid \cite{jaszunska_chinese}. Furthermore, the Chinese and plactic monoids of rank $2$ coincide, hence, they satisfy the same identities.
    
    This paper focuses on the hypoplactic monoid $\hypo$, first studied in depth by Novelli \cite{novelli_hypoplactic}, which stands as the analogue of the plactic monoid in the theory of non-commutative symmetric functions and quasi-symmetric functions (see \cite{Krob1997,Krob1999}). It is also known as the monoid of quasi-ribbon tableaux, combinatorial objects which index quasi-ribbon functions and that are computed from words over the free monoid, using the Krob-Thibon insertion algorithm. The hypoplactic monoid arises by factoring the free monoid by the hypoplactic congruence $\equiv_{\hypo}$, which can be defined in several ways, one of which is $u \equiv_{\hypo} v$ if and only if $u$ and $v$ yield the same quasi-ribbon tableaux. Similarly to the plactic monoid and the crystal graph, in the sense of Kashiwara, the hypoplactic monoid has been shown to accept a quasi-crystal structure, which interacts neatly with the combinatorics of the quasi-ribbon tableaux and the hypoplactic version of the Robinson-Schensted-Knuth correspondence \cite{cm_hypo_crystal}.
    
    The main goal of this paper is to present a systematic study of the identities satisfied by the hypoplactic monoid. The paper is structured as follows: Section~\ref{section:preliminaries} provides the necessary background on universal algebra and the hypoplactic monoid. In Section~\ref{section:embeddings}, we first show that the hypoplactic monoid of rank $n$, denoted by $\hypon$, with $n \geq 2$, does not embed into the hypoplactic monoid of rank $2$. However, we show how to embed it into a direct product of copies of $\hypo_2$. We also show how to embed the infinite rank hypoplactic monoid $\hypo$ into a direct product of infinite copies of $\hypo_2$. Thus, we prove that all these monoids generate the same variety, denoted by $\mathbf{V}_{\hypo}$, and satisfy the same identities. We deduce the basis rank of $\mathbf{V}_{\hypo}$. Section~\ref{section:identities_and_bases} gives a complete characterization of the identities satisfied by $\hypo$, along with some immediate consequences. Furthermore, it also presents a finite basis for $\mathbf{V}_{\hypo}$, the variety generated by $\hypo$, thus showing it has finite axiomatic rank. 


	\section{Preliminaries and notation}
	\label{section:preliminaries}

	
	This section gives the necessary background on universal algebra (see \cite{bs_universal_algebra,mal2012algebraic,mckenzie2018algebras,Bergman_universal_algebra}), in the context of monoids, followed by the definition and essential facts about the hypoplactic monoid.
	
	For the necessary background on semigroups and monoids, see \cite{howie1995fundamentals}; for presentations, see \cite{higgins1992techniques}; for a general background on the plactic monoid, see \cite[Chapter~5]{lothaire_2002}. 
	

    \subsection{Varieties, identities and bases}
    \label{subsection:varieties_identities_and_bases}
        
    Let $X$ be a non-empty set, referred to as an \textit{alphabet}. The free semigroup over the alphabet $X$, denoted by $X^+$, is the set of all words over $X$, under the operation of word concatenation. If we include the empty word, denoted by $\varepsilon$, we obtain the free monoid over the alphabet $X$, denoted by $X^*$.
    
    For any word $u \in X^*$, the \textit{length} of $u$ is denoted by $|u|$, and for any $x \in X$, the number of occurrences of $x$ in $u$ is denoted by $|u|_x$. Suppose $u = u_1 \cdots u_k$, where $u_i \in X$. For any $1 \leq i \leq j \leq k$, the word $u_{i} \cdots u_{j}$ is a \textit{factor} of $u$. For any $i_1, \dots, i_m \in \{1, \dots, k\}$ such that $i_1 < \cdots < i_m$, the word $u_{i_1} \cdots u_{i_m}$ is a \textit{subsequence} of $u$.
    
    The \textit{content} of $u$, denoted by $c(u)$, is the infinite tuple of non-negative integers, indexed by $X$, whose $x$-th element is $|u|_x$. The \textit{support} of $u$, denoted by $supp(u)$, is the subset of $X$ such that $x \in supp(u) \iff \left|u\right|_x \geq 1$. Notice that if two words share the same content, then they also share the same support.

    A (\textit{monoid}) \textit{identity}, over an alphabet of variables $X$, is a formal equality $u \approx v$, where $u$ and $v$ are words in the free monoid $X^*$, and is non-trivial if $u \neq v$. We say a variable $x$ \textit{occurs} in $u \approx v$ if $x \in supp(u)$ or $x \in supp(v)$. The \textit{rank} of an identity is the number of distinct variables which occur in it. We say that an identity \textit{holds} in a monoid $M$ (or that $M$ satisfies the identity) if for every morphism $\psi: X^* \rightarrow M$ (also referred to as an \textit{evaluation}), the equality $\psi(u) = \psi(v)$ holds in $M$. In other words, $M$ satisfies the identity $u \approx v$ if equality in $M$ holds under every substitution of the variables of $u$ and $v$ by elements of $M$. 
    
    An identity $u \approx v$ is \textit{balanced} if $c(u)=c(v)$. Any identity satisfied by a monoid that contains a free monogenic submonoid, such as the aforementioned plactic and related monoids, must be balanced.
    
    Let $\Sigma$ be a nonempty set of identities, over an alphabet $X$. An identity $u \approx v$ is said to be a \textit{consequence} of $\Sigma$ if there exist $k \in \nset$, words $w_1, \dots, w_k \in X^*$, and substitutions $\sigma_1, \dots, \sigma_{k-1}$ of variables by elements of $X^*$ (that is, endomorphisms of $X^*$), such that $u = w_1$, $v = w_k$ and, for $1 \leq i < k$, 
    \begin{center}
    $w_i = r_i \sigma_i (p_i) s_i$, \\
    $w_{i+1} = r_i \sigma_i (q_i) s_i$,
    \end{center}
    for some $p_i, q_i, r_i, s_i \in X^*$ such that $p_i \approx q_i$ or $q_i \approx p_i$ is in $\Sigma$. 
    
    A class of monoids $\mathbf{K}$ is a \textit{monoid equational class} (or simply an \textit{equational class}) if there exists a set of identities $\Sigma$ such that $\mathbf{K}$ is the class of all monoids that satisfy all identities in $\Sigma$. Dually, a set of identities $\Sigma$ is an \textit{equational theory} if there exists a class of monoids $\mathbf{K}$ such that $\Sigma$ is the set of identities satisfied by all monoids in $\mathbf{K}$. When this holds, $\Sigma$ is called the \textit{equational theory of} $\mathbf{K}$. Equational theories are closed under taking consequences.
    
    On the other hand, a class of monoids $\mathbf{K}$ is a \textit{monoid variety} (or simply a \textit{variety}) if it is closed under the taking of homomorphic images, submonoids and direct products. For any class of monoids $\mathbf{K}$, we say that a variety is generated by $\mathbf{K}$ if it is the smallest variety containing $\mathbf{K}$, and denote it by $\mathbf{V} (\mathbf{K})$. Similarly, we say a variety $\mathbf{K}$ is generated by a monoid $M$ if it is the smallest variety containing $M$, and denote it by $\mathbf{V} (M)$. 
    
    The notions of equational class and variety coincide, as seen in the following theorem, known as Birkhoff's $HSP$ Theorem:
    \begin{theorem}[{\cite{birkhoff1935}}]
        $\mathbf{K}$ is an equational class if and only if $\mathbf{K}$ is a variety.
    \end{theorem}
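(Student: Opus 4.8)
The plan is to prove both implications, with the direction ``equational class $\Rightarrow$ variety'' being routine and the converse requiring the construction of relatively free monoids. For the easy direction, suppose $\mathbf{K}$ is the class of all monoids satisfying a set of identities $\Sigma$. I would verify directly that satisfaction of a single identity $u \approx v$ is inherited by submonoids (an evaluation into a submonoid is just an evaluation into the ambient monoid), by direct products (an identity holds coordinatewise), and by homomorphic images (given a surjection $\varphi \colon M \twoheadrightarrow N$ and an evaluation into $N$, lift it by choosing $\varphi$-preimages of the images of the variables, apply the identity in $M$, then push forward). Since each closure property holds for every identity in $\Sigma$ separately, $\mathbf{K}$ is closed under $H$, $S$ and $P$, hence is a variety.

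For the converse, suppose $\mathbf{K}$ is a variety and set $\Sigma = \mathrm{Id}(\mathbf{K})$, the equational theory consisting of all identities satisfied by every monoid in $\mathbf{K}$. Let $\mathbf{K}'$ be the equational class defined by $\Sigma$; the containment $\mathbf{K} \subseteq \mathbf{K}'$ is immediate, so the task is to prove $\mathbf{K}' \subseteq \mathbf{K}$. The central object is the relatively free monoid on an arbitrary alphabet $X$: form the quotient $F(X) = X^* / {\equiv_\Sigma}$, where $u \equiv_\Sigma v$ precisely when $u \approx v$ is a consequence of $\Sigma$ in the sense defined above. This relation is a fully invariant congruence (context words $r_i, s_i$ make it a congruence, substitutions $\sigma_i$ make it invariant under endomorphisms), so $F(X) \in \mathbf{K}'$, and $F(X)$ enjoys the universal property that every map $X \to M$ into a monoid $M$ satisfying $\Sigma$ extends uniquely to a homomorphism out of $F(X)$.

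The crux is to show $F(X) \in \mathbf{K}$. I would realize it as a subdirect product of members of $\mathbf{K}$: choose a \emph{set} of representatives $\{(M_i, \phi_i)\}_{i \in I}$ for all pairs consisting of a monoid $M_i \in \mathbf{K}$ generated by the image of a map $\phi_i \colon X \to M_i$, using the cardinality bound $\lvert M_i \rvert \leq \lvert F(X) \rvert$ to guarantee such a set exists. The diagonal map $x \mapsto (\phi_i(x))_{i \in I}$ extends to a homomorphism $X^* \to \prod_{i \in I} M_i$, whose image is a submonoid of a direct product of members of $\mathbf{K}$ and hence lies in $\mathbf{K}$ by closure under $P$ followed by $S$. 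The kernel of this homomorphism is exactly $\equiv_\Sigma$: an identity lies in $\Sigma$ iff it holds in every $M_i$, which is iff it maps the two sides to the same element of the product. Therefore the image is isomorphic to $F(X)$, giving $F(X) \in \mathbf{K}$.

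I would then close the argument using closure under $H$. Given any $N \in \mathbf{K}'$, take $X$ to index a generating set of $N$ (for instance $X = N$), yielding a surjection $X^* \twoheadrightarrow N$; since $N$ satisfies every identity in $\Sigma$, this surjection factors through $F(X) = X^*/{\equiv_\Sigma}$, exhibiting $N$ as a homomorphic image of $F(X)$. As $F(X) \in \mathbf{K}$ and $\mathbf{K}$ is closed under $H$, we conclude $N \in \mathbf{K}$, whence $\mathbf{K}' = \mathbf{K}$ and $\mathbf{K}$ is an equational class. The main obstacle is precisely the construction of $F(X)$ inside $\mathbf{K}$, and within it the set-theoretic subtlety of replacing the proper class of all $X$-generated members of $\mathbf{K}$ by a set of representatives of bounded cardinality; once that is in place the remaining steps are straightforward bookkeeping with evaluations and universal properties.
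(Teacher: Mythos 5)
Your proposal is correct, but note that the paper itself offers no proof of this statement: it is quoted as Birkhoff's classical theorem with a citation to \cite{birkhoff1935}, so there is no internal argument to compare against. What you have written is the standard textbook proof (essentially Birkhoff's original argument, as in \cite{bs_universal_algebra}): the routine verification that satisfaction of identities is preserved under $H$, $S$ and $P$, and, for the converse, the construction of the relatively free monoid $F(X)=X^*/{\equiv_\Sigma}$ realized inside $\mathbf{K}$ as a subdirect product of a \emph{set} of representatives of $X$-generated members, followed by closure under $H$ to capture every monoid satisfying $\Sigma$. Two small points deserve explicit mention if this were written out in full. First, your identification of the kernel of the diagonal homomorphism with $\equiv_\Sigma$ is legitimate here only because you chose $\Sigma=\mathrm{Id}(\mathbf{K})$, which is already semantically closed; for an arbitrary $\Sigma$ this identification is the completeness theorem of equational logic, a separate result, so it is worth flagging that no such appeal is needed. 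Second, both the factorization $X^*\twoheadrightarrow N$ through $F(X)$ and the kernel computation use soundness -- that a monoid satisfying $\Sigma$ satisfies every consequence of $\Sigma$ -- which is the fact the paper records as ``equational theories are closed under taking consequences''; it is routine but should be stated as a lemma rather than absorbed silently.
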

    
    An important corollary of this theorem is that, for any monoid $M$, the identities satisfied by $M$ must also be satisfied by all other monoids in $\mathbf{V} (M)$.
    
    A set of identities $\mathcal{B}$ is an \textit{equational basis} (or simply \textit{basis}) of a variety $\mathbf{V}$ if the equational theory of $\mathbf{V}$ consists of all consequences of $\mathcal{B}$. A variety $\mathbf{V}$ is \textit{finitely based} if it admits a finite basis. The \textit{axiomatic rank} of $\mathbf{V}$ is the least natural number $r_a (\mathbf{V})$ such that $\mathbf{V}$ admits a basis $\mathcal{B}$, where the rank of each identity in $\mathcal{B}$ does not exceed $r_a (\mathbf{V})$. If no such natural number exists, we say that $\mathbf{V}$ has \textit{infinite axiomatic rank}. Notice that if $\mathbf{V}$ is finitely based, then it has finite axiomatic rank.
    
    A variety $\mathbf{V}$ is always generated by its $\mathbf{V}$-free monoid $\mathcal{F}_{\omega} (\mathbf{V})$ over an infinite alphabet. However, $\mathbf{V}$ may also be generated by a $\mathbf{V}$-free monoid $\mathcal{F}_{n} (\mathbf{V})$ over a finite alphabet with $n$ letters, for some $n \in \nset$. In such a case, the least natural number $r_b (\mathbf{V})$ such that $\mathbf{V}$ is generated by $\mathcal{F}_{r_b (\mathbf{V})} (\mathbf{V})$ is called the \textit{basis rank} of $\mathbf{V}$. If $\mathbf{V}$ is generated by a monoid with a finite number of generators, the minimal such number coincides with the basis rank of $\mathbf{V}$.
    
    A monoid $M$ is \textit{residually finite} (or \textit{finitely approximable}) if it is embeddable in a direct product of a family of finite monoids. If a monoid is both residually finite and finitely presented, then the word problem is solvable for such a monoid (see, for example, \cite{evans1978}).  
        
	
	\subsection{The hypoplactic monoid}
    \label{subsection:hypo}
    
    This subsection gives a brief overview of the hypoplactic monoid and its related combinatorial object and insertion algorithm, as well as results from \cite{cm_identities}. For more information, see \cite{novelli_hypoplactic} and \cite{cm_hypo_crystal}.
    
    Let $\aset = \{ 1 < 2 < 3 < \cdots \}$ denote the set of positive integers, viewed as an infinite ordered alphabet, and let $\anset = \{ 1 < \cdots < n \}$ denote the set of the first $n$ positive integers, viewed as a finite ordered alphabet.

    A \textit{quasi-ribbon tableau} is a (finite) grid of cells, aligned so that the leftmost cell in each row is below the rightmost cell of the previous row, filled with letters from $\aset$, such that the entries in each row are weakly increasing from left to right, and the entries in each column are strictly increasing from top to bottom. An example of a quasi-ribbon tableau is
    \begin{center}  
        $\tableau{
        1 \& 2 \& 4 \\
          \&   \& 5 \\
          \&   \& 6 \& 6 \\
          \&   \&   \& 7 \& 8 \\
        }$ .
    \end{center}    
    Observe that the same letter cannot appear in two different rows of a quasi-ribbon tableau.
    
    The following algorithm allows us to insert a letter from $\aset$ into an existing quasi-ribbon tableau, in order to obtain a new quasi-ribbon tableau:
        
    \begin{algorithm}[H]
        \KwIn{A quasi-ribbon tableau $T$ and a letter $a \in \aset$.}     
        \KwOut{A quasi-ribbon tableau $T \leftarrow a$.}
        \BlankLine
        \textbf{Method:}\\
            \Indp\lIf{there is no entry in $T$ that is less than or equal to $a$,}{output the tableau obtained by creating a new cell, labelled with $a$, and gluing $T$ by its top-leftmost entry to the bottom of this new cell;}
            \lElse{let $x$ be the right-most and bottom-most entry of $T$ that is less than or equal to $a$. Separate $T$ in two parts, such that one part is from the top left down to and including $x$. Put a new entry $a$ to the right of $x$ and glue the remaining part of $T$ (below and to the right of $x$) onto the bottom of the new entry $a$.}
        \Indm Output the resulting tableau.
        \caption{\textit{Krob--Thibon algorithm}.}
    \end{algorithm}
    
    Let $u \in \aset^*$. Using the insertion algorithm above, we can compute a unique quasi-ribbon tableau $\psymb_{\hypo} (u)$ from $u$: we start with the empty tableau and insert the letters of $u$, one-by-one from left-to-right -- see Example~\ref{example:insertion_alg}. 
    
    \begin{example}
    \label{example:insertion_alg}
        Computing $\psymb_{\hypo}(12654768)$:
        \begin{align*}  
            \xrightarrow{1}\quad
            \tableau{
                1 \\
            }
            \quad\xrightarrow{2}\quad 
            \tableau{
                1 \& 2 \\
            }
            \quad\xrightarrow{6}\quad 
            \tableau{
                1 \& 2 \& 6 \\
            }
            \quad\xrightarrow{5}\quad 
            \tableau{
                1 \& 2 \& 5 \\
                  \&   \& 6 \\
            }
            \quad\xrightarrow{4}\quad
            \tableau{
                1 \& 2 \& 4 \\
                \&   \& 5 \\
                \&   \& 6 \\
            }
            \\[10pt]
            \xrightarrow{7}\quad
            \tableau{
            1 \& 2 \& 4 \\
                \&   \& 5 \\
                \&   \& 6 \& 7 \\
            }
            \quad\xrightarrow{6}\quad
            \tableau{
                1 \& 2 \& 4 \\
                \&   \& 5 \\
                \&   \& 6 \& 6 \\
                \&   \&   \& 7 \\
            }
            \quad\xrightarrow{8}\quad
            \tableau{
                1 \& 2 \& 4 \\
                \&   \& 5 \\
                \&   \& 6 \& 6 \\
                \&   \&   \& 7 \& 8 \\
            }
        \end{align*}
    \end{example}

    We define the relation $\equiv_{\hypo}$ on $\aset^*$ as follows: For $u,v \in \aset^*$,
    \begin{center}
        $u \equiv_{\hypo} v \iff \psymb_{\hypo}(u) = \psymb_{\hypo}(v)$.
    \end{center}
    This relation is a congruence on $\aset^*$, called the \textit{hypoplactic congruence}. The factor monoid $\aset^*/{\equiv_{\hypo}}$ is the infinite-rank \textit{hypoplactic monoid}, denoted by $\hypo$. The congruence $\equiv_{\hypo}$ naturally restricts to a congruence on $\anset^*$, and the factor monoid $\anset^*/{\equiv_{\hypo}}$ is the \textit{hypoplactic monoid of rank} $n$, denoted by $\hypon$.
    
    It follows from the definition of $\equiv_{\hypo}$ that each element $[u]_{\hypo}$ of $\hypo$ can be identified with the combinatorial object $\psymb_{\hypo} (u)$. 
    
    Recall that the content of $u$ describes the number of occurrences of each letter of $\aset$ in $u$. It is immediate from the definition of the hypoplactic monoid that if $u \equiv_{\hypo} v$, then $c(u) = c(v)$. Thus, we can define the content of an element of $\hypo$ as the content of any word which represents it. Furthermore, since $c(u) = c(v)$ implies that $supp(u) = supp(v)$, we can also define the support of an element of $\hypo$ as the support of any word which represents it.
    
    Notice that $\hypo_n$ is a submonoid of $\hypo$, for each $n \in \nset$, and, for $n,m \in \nset$, if $n \leq m$, then $\hypo_n$ is a submonoid of $\hypo_m$.

    Let $u \in \anset^*$. Suppose $supp \left( u \right) = \{a_1 < \dots < a_k\}$, for some $k \in \nset$. We say $u$ has an $a_{i+1}$-$a_i$ \textbf{\textit{inversion}}, for $1 \leq i \leq k-1$, if it admits $a_{i+1} a_i$ as a subsequence. In other words, when reading $u$ from left-to-right, there is at least an occurrence of $a_{i+1}$ before the last occurrence of $a_i$. Notice that we only consider inversions of consecutive elements of the support of $w$.
    
    \begin{example}
        The word $31214$ has $3$-$2$ and $2$-$1$ inversions, but no $4$-$3$ inversion. On the other hand, $21341$ has a $2$-$1$ inversion, but no $4$-$3$ nor $3$-$2$ inversions.
    \end{example}
	
	The following characterization of the hypoplactic monoid is a consequence of \cite[Subsection~4.2]{novelli_hypoplactic}. It arises as a corollary of Theorem~$4.18$ and Note~$4.10$ of \cite{novelli_hypoplactic}:
	
    \begin{proposition}
    \label{proposition:hypo_inversion_char}
        For $u,v \in \anset^*$, we have that $u \equiv_{\hypo} v$ if and only if $u$ and $v$ share exactly the same content and inversions.
	\end{proposition}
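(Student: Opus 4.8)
The plan is to reduce the congruence $u \equiv_{\hypo} v$, which by definition means $\psymb_{\hypo}(u) = \psymb_{\hypo}(v)$, to the purely combinatorial data of content and inversions. The central observation is that a quasi-ribbon tableau is completely determined by two pieces of information: its content, and, for each pair of consecutive support letters $a_i < a_{i+1}$, whether $a_i$ and $a_{i+1}$ occupy the same row or different rows. Granting this, the proposition follows once I show that $a_i$ and $a_{i+1}$ lie in different rows of $\psymb_{\hypo}(u)$ if and only if $u$ has an $a_{i+1}$-$a_i$ inversion. Indeed, $\psymb_{\hypo}(u) = \psymb_{\hypo}(v)$ would then be equivalent to $u$ and $v$ having the same content and the same set of ``broken'' consecutive pairs, that is, the same content and the same inversions.

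First I would make the structural observation precise. Since a quasi-ribbon tableau contains no $2 \times 2$ block of cells and, as noted after the definition, no letter appears in two different rows, traversing the ribbon from its top-left cell to its bottom-right cell reads off the support values $a_1 < \cdots < a_k$ in increasing order, each forming a single horizontal run whose length is its multiplicity. Between two consecutive values the traversal either stays in the current row or drops by exactly one row. The multiplicities (that is, the content) fix the run lengths, and the positions of the drops fix the ribbon shape; together they reconstruct the tableau. Hence the tableau is determined by its content together with the set of consecutive pairs at which a drop occurs.

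The heart of the proof is the equivalence between drops and inversions. I would establish it by induction on $|u|$, tracking how a single right-insertion of a letter $a$ via the Krob--Thibon algorithm simultaneously alters the inversion set of the word and the broken/unbroken status of each consecutive support pair, and checking that the two changes agree. The cases to separate are whether $a$ is a new support value or already occurs, and, when $a$ already occurs, how inserting it to the right of the bottom-most entry $x \leq a$ pushes the part of the tableau lying to the right of $x$ down by one row; this is exactly what turns a previously unbroken pair $(a_i, a_{i+1})$ into a broken one when $a = a_i$ and $a_{i+1}$ already occurs, mirroring the fact that appending $a_i$ places an occurrence of $a_i$ after every earlier letter and hence creates an $a_{i+1}$-$a_i$ inversion precisely when $a_{i+1}$ already occurs. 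I expect this case analysis to be the main obstacle, since one must verify that no spurious drops or inversions are introduced and that the correspondence is preserved in every configuration. A cleaner but equivalent alternative, which I would use to cross-check, is to prove directly that the rows of $\psymb_{\hypo}(u)$ begin precisely at each support value $a_{i+1}$ for which some occurrence of $a_{i+1}$ precedes the last occurrence of $a_i$; this sidesteps the inductive bookkeeping at the cost of arguing directly about the output of the insertion algorithm.

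Finally, combining the two steps yields the claim: $u \equiv_{\hypo} v$ holds if and only if $\psymb_{\hypo}(u)$ and $\psymb_{\hypo}(v)$ have the same content and the same drops, which by the inversion--drop equivalence is exactly the condition that $u$ and $v$ share the same content and inversions.
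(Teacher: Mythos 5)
Your proposal is correct, but it takes a genuinely different route from the paper, because the paper does not prove this proposition at all: it is stated as a consequence of results in the literature, namely Theorem~4.18 and Note~4.10 of Novelli's paper \cite{novelli_hypoplactic}, whose analysis goes through the hypoplactic presentation and the uniqueness of quasi-ribbon words in each congruence class. Your argument is instead self-contained and works directly with the definition $u \equiv_{\hypo} v \iff \psymb_{\hypo}(u) = \psymb_{\hypo}(v)$, via two correct reductions: first, a quasi-ribbon tableau is determined by its content together with the set of consecutive support pairs at which the ribbon drops a row (this is sound, since the ribbon reading is weakly increasing and no letter can occupy two rows, so each letter forms a single horizontal run and row breaks can only occur between runs of consecutive support values); second, drops coincide with inversions, proved by induction on $|u|$ through the Krob--Thibon insertion. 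Your description of the inductive mechanism is accurate: inserting $a$ leaves the portion of the tableau with entries $\leq a$ untouched, places $a$ at its end, and glues the entries $>a$ below the new cell, so the only drop created is between $a$ and its successor in the support, and this happens exactly when that successor already occurs in the word --- which is precisely when appending $a$ creates the corresponding inversion; the remaining cases (new support letter, $a$ below all entries, pairs split by a new support value) work out the same way. The trade-off is clear: the paper's citation is shorter and leans on Novelli's established machinery, while your route makes the characterization self-contained and explains combinatorially why content plus inversions is a complete invariant; to be a finished proof it would still require writing out the full case analysis you outline, but no case fails.
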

	
	Attending to the previous result, we say that an element $[u]_{\hypo}$ of $\hypo$ has an $a_{i+1}$-$a_i$ inversion if the word $u$ itself, and hence any other word in $[u]_{\hypo}$, has an $a_{i+1}$-$a_i$ inversion. This characterization will be extensively used throughout the rest of this paper. 
	
    The hypoplactic monoid can also be defined by the presentation $\left\langle \aset \mid \mathcal{R}_{\hypo} \right\rangle$, where
    \begin{align*}
        \mathcal{R}_{\hypo} =& \left\{ (acb,cab): a \leq b < c \right\}\\
        & \cup \left\{ (bac,bca): a < b \leq c \right\}\\
        & \cup \left\{ (cadb,acbd): a \leq b < c \leq d \right\}\\
        & \cup \left\{ (bdac,dbca): a < b \leq c < d \right\}.
    \end{align*}
    
    The first two defining relations are known as the \textit{plactic relations}, while the remaining two are known as the \textit{hypoplactic relations}. A presentation for the hypoplactic monoid of rank $n$, for some $n \in \nset$, can be obtained by restricting generators and relations of the above presentation to generators in $\anset$. Notice that these relations are length-preserving. As such, it is easy to see that the hypoplactic monoids are residually finite. 

	
	\section{Embeddings}
	\label{section:embeddings}
	
	
	In this section, we prove that the hypoplactic monoids of rank greater than or equal to $2$ satisfy the same identities. We do this by constructing embeddings of hypoplactic monoids of any rank greater than $2$ into direct products of copies of the hypoplactic monoid of rank $2$, as it is not possible to directly embed one into another. Thus, they generate the same variety and, by Birkhoff's Theorem, satisfy exactly the same identities. We also show that the basis rank of the variety generated by $\hypo$ is $2$.
	
	\subsection{Non-existence of embedding into a hypoplactic monoid of lesser rank}
    \label{subsection:no_embedding_hypon_hyponm}
    
    It is not possible to directly embed a hypoplactic monoid of finite rank into a hypoplactic monoid of lesser rank:
	
	\begin{proposition}
    \label{prop:no_embedding_hypon_hypom}
		For all $n > m \geq 1$, there is no embedding of $\hypon$ into $\hypo_m$.
	\end{proposition}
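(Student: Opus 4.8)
The plan is to attach to every monoid $M$ the quantity $\nu(M)$, the largest size of a set of pairwise non-commuting elements, and to show that $\nu(\hypon)$ is strictly larger than $\nu(\hypo_m)$ when $n>m$. Since a set of pairwise non-commuting elements of a submonoid is also such a set in the overmonoid, $\nu$ cannot increase under passing to submonoids; in particular an embedding $\hypon\hookrightarrow\hypo_m$ would give $\nu(\hypon)\le\nu(\hypo_m)$. So it suffices to compute $\nu$ on the hypoplactic monoids, and I claim that $\nu(\hypo_k)=2^{k}-1$.

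The computation rests on two facts about when two elements commute, both read off from Proposition~\ref{proposition:hypo_inversion_char}: $[u]_{\hypo}$ and $[v]_{\hypo}$ commute exactly when $uv$ and $vu$ have the same inversions, their contents being automatically equal. For the upper bound I would prove that any two elements with the same support commute. Indeed, if $supp(u)=supp(v)=S$, then for every pair $a_{i+1},a_i$ of consecutive elements of $S$ both $uv$ and $vu$ contain $a_{i+1}a_i$ as a subsequence (an $a_{i+1}$ from the first factor preceding an $a_i$ from the second), so $uv$ and $vu$ realise all inversions of $S$ and are therefore hypoplactically equal. Consequently the elements of a pairwise non-commuting set have pairwise distinct supports, none of which is empty (the empty support is the identity, which is central); as there are only $2^{k}-1$ non-empty subsets of $\{1,\dots,k\}$, this bounds $\nu(\hypo_k)$ by $2^{k}-1$.

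For the matching lower bound I would exhibit $2^{k}-1$ pairwise non-commuting elements, namely the elements $w_S$ indexed by the non-empty subsets $S\subseteq\{1,\dots,k\}$, where $w_S$ is represented by the word listing $S$ in increasing order. Given distinct non-empty $S$ and $T$, write $M=S\cup T$. Because $w_S$ and $w_T$ are represented by strictly increasing words, for consecutive elements $c>d$ of $M$ the subword $cd$ occurs in $w_S w_T$ only via a $c$ from $S$ followed by a $d$ from $T$, so $cd$ is an inversion of $w_S w_T$ precisely when $c\in S$ and $d\in T$, and an inversion of $w_T w_S$ precisely when $c\in T$ and $d\in S$. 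A short check shows that these two conditions agree for every consecutive pair only if all elements of $M$ lie in $S$ alone, in $T$ alone, or in $S\cap T$; since $S\neq T$ are non-empty this is impossible, so some consecutive pair is an inversion of one product but not the other, and $w_S,w_T$ do not commute. Thus $\nu(\hypo_k)=2^{k}-1$, and $2^{n}-1>2^{m}-1$ for $n>m$ contradicts $\nu(\hypon)\le\nu(\hypo_m)$, proving that no embedding exists.

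The delicate point, and the one I would be most careful about, is that inversions in the hypoplactic monoid are taken between consecutive elements of the support rather than between consecutive integers; this is exactly what makes distinct supports always produce a non-commuting pair (so that $\nu$ grows like $2^{k}$ rather than polynomially) and what must be tracked correctly when the supports of $u$ and $v$ interleave to form new consecutive pairs in $supp(u)\cup supp(v)$. Everything else—the monotonicity of $\nu$ and the final numerical comparison—is immediate once the two commuting facts are established.
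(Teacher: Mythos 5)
Your proof is correct, and it takes a genuinely different route from the paper's. The paper argues by contradiction with a minimal-rank counterexample: after disposing of $m=1$ by commutativity and reducing $n>m\geq 2$ to the case of consecutive ranks, it assumes $\phi\colon\hypon\to\hyponm$ is an embedding with $n$ minimal; minimality forces $supp\left(\phi\left([1\cdots(n-1)]_{\hypon}\right)\right)=\anmset$, so $\phi\left([1\cdots(n-1)]_{\hypon}^2\right)$ has full support and all inversions and is therefore central in $\hyponm$ by Proposition~\ref{proposition:hypo_inversion_char}; since $[1\cdots(n-1)]_{\hypon}^2$ does not commute with $[n]_{\hypon}$ in $\hypon$, injectivity fails. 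You instead compute an embedding-monotone invariant exactly: the maximum size $\nu$ of a set of pairwise non-commuting elements, proving $\nu(\hypo_k)=2^k-1$ from two commutation facts (equal support implies commuting, giving the upper bound; the strictly increasing words $w_S$ pairwise fail to commute, giving the lower bound -- both arguments check out against Proposition~\ref{proposition:hypo_inversion_char}). Both proofs pivot on the same mechanism, namely that an element realizing all inversions over its support commutes with everything whose support it absorbs, but your quantitative invariant buys more: it distinguishes all finite ranks simultaneously, needs no reduction to consecutive ranks and no minimality argument, and immediately yields Corollary~\ref{corollary:no_embedding_hypo_hypo} as well, since $\nu(\hypo)$ is unbounded while $\nu(\hypon)$ is finite. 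The paper's argument is more economical, needing only one well-chosen central element rather than an exact count. One small wording point: your phrase ``all elements of $M$ lie in $S$ alone, in $T$ alone, or in $S\cap T$'' must be read as ``all elements of $M$ lie in the \emph{same} one of the three blocks $S\setminus T$, $T\setminus S$, $S\cap T$''; that is what the consecutive-pair analysis actually gives (agreement holds for a pair $c>d$ exactly when $c$ and $d$ lie in the same block, and the consecutive pairs chain through all of $M$), and it is what makes the conclusion $T=\emptyset$, $S=\emptyset$, or $S=T$ follow.
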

	
	\begin{proof}
		First of all, notice that $\hypo_1$ is isomorphic to the free monogenic monoid and $\hypon$ is noncommutative, for any $n \geq 2$. Thus, there is no embedding of $\hypon$ into $\hypo_1$.
		
		On the other hand, if there exists an embedding of $\hypon$ into $\hypo_m$, for some $n > m \geq 2$, then, since $\hypo_m$ is a submonoid of $\hyponm$, there must also exist an embedding of $\hypon$ into $\hyponm$. As such, we just need to prove that this second embedding cannot exist.
		
		Suppose, in order to obtain a contradiction, that there exists $n \geq 3$ such that we have an embedding $\phi: \hypon \longmapsto \hyponm$. Without loss of generality, suppose $n$ is the smallest positive integer in such conditions.
		
		Observe that $supp \left( \phi \left( \left[1 \cdots (n-1) \right]_{\hypon} \right) \right) = \anmset$, that is, the image of the product of all generators of $\hypon$, except for $n$, has all the possible letters of $\anmset$. Indeed, if $supp \left( \phi \left( \left[1 \cdots (n-1) \right]_{\hypon} \right) \right) \subsetneqq \anmset$, we would be able to construct an embedding from the submonoid $\hyponm$ of $\hypon$, generated by all generators of $\hypon$ except for $n$, into a submonoid of $\hyponm$ isomorphic to $\hypo_{n-2}$. This contradicts the minimality of $n$.
				
		Hence, $\phi \left( \left[1 \cdots (n-1) \right]_{\hypon}^2 \right)$ has all the possible inversions of letters of $\anmset$. If we multiply this element by any other element of $\hyponm$, either on the left or the right, we obtain the same result, by Proposition~\ref{proposition:hypo_inversion_char}. Thus, we have that 
		\[
		\phi \left( [n]_{\hypon} \cdot \left[1 \cdots (n-1) \right]_{\hypon}^2 \right) = \phi \left( \left[1 \cdots (n-1) \right]_{\hypon}^2 \cdot [n]_{\hypon} \right).
		\]
		
		On the other hand, we have that
		\[
		[n]_{\hypon} \cdot \left[1 \cdots (n-1) \right]_{\hypon}^2 \neq \left[1 \cdots (n-1) \right]_{\hypon}^2 \cdot [n]_{\hypon}.
		\]
        
        This contradicts our hypothesis that $\phi$ is injective. Hence, for all $n \geq 2$, there is no embedding of $\hypon$ into $\hyponm$. As such, there is no embedding of $\hypon$ into $\hypo_m$, for $n > m \geq 2$.
	\end{proof}
	
	\begin{corollary}
    \label{corollary:no_embedding_hypo_hypo}
		There is no embedding of $\hypo$ into $\hypon$, for any $n \in \nset$. 
	\end{corollary}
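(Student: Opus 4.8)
The final statement to prove is Corollary~\ref{corollary:no_embedding_hypo_hypo}: there is no embedding of the infinite-rank hypoplactic monoid $\hypo$ into $\hypon$ for any $n \in \nset$.

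\medskip

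The plan is to derive this as a direct consequence of Proposition~\ref{prop:no_embedding_hypon_hypom}, which we may assume, together with the observation (already recorded in the excerpt) that $\hypo_k$ is a submonoid of $\hypo$ for every $k$, and that $\hypo_k$ is a submonoid of $\hypo_m$ whenever $k \leq m$. The idea is that $\hypo$ contains, for arbitrarily large $k$, a copy of $\hypo_k$, so it cannot be squeezed into any fixed finite-rank hypoplactic monoid.

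\medskip

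Concretely, I would argue by contradiction. Suppose there is an embedding $\phi : \hypo \longmapsto \hypon$ for some fixed $n \in \nset$. Since $\hypo_{n+1}$ is a submonoid of $\hypo$, the restriction of $\phi$ to $\hypo_{n+1}$ is still injective, and composing with the inclusion $\hypo_{n+1} \hookrightarrow \hypo$ gives an embedding of $\hypo_{n+1}$ into $\hypon$. But $n+1 > n \geq 1$, so Proposition~\ref{prop:no_embedding_hypon_hypom} says no such embedding exists. This contradiction shows $\phi$ cannot exist, completing the proof.

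\medskip

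I expect no genuine obstacle here: the whole content lies in Proposition~\ref{prop:no_embedding_hypon_hypom}, and the corollary merely packages it by exhibiting a finite-rank submonoid of $\hypo$ of rank exceeding $n$. The only point requiring a moment's care is the routine fact that the restriction of an injective morphism to a submonoid remains an injective morphism (an embedding), so that composing with the submonoid inclusion yields an embedding of $\hypo_{n+1}$ into $\hypon$; this is immediate. Should one prefer to avoid invoking the proposition in its full generality, the same contradiction can be obtained directly by taking $n+1$ in place of the general index, but routing through Proposition~\ref{prop:no_embedding_hypon_hypom} is the cleanest presentation.
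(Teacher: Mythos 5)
Your proof is correct and matches the paper's own argument essentially verbatim: both restrict the hypothetical embedding $\phi : \hypo \to \hypon$ to the submonoid $\hypo_{n+1}$ (the first $n+1$ generators) to obtain an embedding of $\hypo_{n+1}$ into $\hypon$, contradicting Proposition~\ref{prop:no_embedding_hypon_hypom}.
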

	
	\begin{proof}
		If such an embedding existed, for some $n \in \nset$, then, by restricting the embedding to the first $n+1$ generators of $\hypo$, we would obtain an embedding of $\hyponp$ into $\hypon$, which contradicts the previous proposition.
	\end{proof}
	
	
	\subsection{Embedding into a direct product of copies of the hypoplactic monoid of rank 2}
    \label{subsection:embedding_hypon_direct_product_hypo2}
	
	Although it is not possible to directly embed a hypoplactic monoid of finite rank $n$ into the hypoplactic monoid of rank $2$, for $n > 2$, we now show how to embed $\hypon$ into a direct product of copies of $\hypo_2$. We also show how to embed $\hypo$ into a direct product of infinitely many copies of $\hypo_2$. For simplicity, we shall denote by $M^k$ the direct product of $k$ copies of a monoid $M$.
	
	For any $i,j \in \aset$, with $i < j$, define a map from $\aset$ to $\hypo_2$ in the following way: For any $a \in \aset$,
	\begin{align*} 
        a &\longmapsto \begin{cases}
            [1]_{\hypo_2} & \text{if } a = i;\\
            [2]_{\hypo_2} & \text{if } a = j;\\
            [21]_{\hypo_2} & \text{if } i < a < j;\\
            \left[\varepsilon\right]_{\hypo_2} & \text{otherwise};
        \end{cases}
	\end{align*}
	and extend it to a homomorphism $\varphi_{ij}: \aset^* \longrightarrow \hypo_2$, in the usual way.
	
	Note that $\varphi_{ij} (w)$ is the hypoplactic class of the word obtained from $w$ by replacing any occurrence of $i$ by $1$; any occurrence of $j$ by $2$; any occurrence of an $a$, with $i < a < j$, by $21$; and erasing any occurrence of any other element.
	
	\begin{lemma}
	\label{lemma:phi_factor_homomorphism}
		$\varphi_{ij}$ factors to give a homomorphism $\varphi_{ij} : \hypo \longrightarrow \hypo_2$.
	\end{lemma}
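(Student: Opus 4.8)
The plan is to show that the homomorphism $\varphi_{ij}: \aset^* \longrightarrow \hypo_2$ is constant on the classes of $\equiv_{\hypo}$, so that it induces a well-defined homomorphism on the quotient $\hypo = \aset^*/{\equiv_{\hypo}}$. Concretely, I would prove that $u \equiv_{\hypo} v$ implies $\varphi_{ij}(u) = \varphi_{ij}(v)$, and the natural tool is the characterization of equality in $\hypo_2$ furnished by Proposition~\ref{proposition:hypo_inversion_char}: two words over the two-letter alphabet $\{1 < 2\}$ represent the same element of $\hypo_2$ precisely when they share the same content and the same inversions. Since the only possible inversion over a two-letter alphabet is a $2$-$1$ inversion, it suffices to check that $\varphi_{ij}$ preserves both the content and the presence or absence of a $2$-$1$ inversion.

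First I would record the content calculation. For any word $w$, the number of $1$'s in $\varphi_{ij}(w)$ equals $|w|_i + \sum_{i<a<j}|w|_a$ and the number of $2$'s equals $|w|_j + \sum_{i<a<j}|w|_a$, because each letter $i$ contributes a single $1$, each letter $j$ a single $2$, and each letter strictly between $i$ and $j$ contributes the factor $21$. If $u \equiv_{\hypo} v$ then $c(u)=c(v)$, so these counts agree for $u$ and $v$; hence $\varphi_{ij}(u)$ and $\varphi_{ij}(v)$ always have the same content.

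Next comes the inversion check, where the real content lies. Assuming $u \equiv_{\hypo} v$, by Proposition~\ref{proposition:hypo_inversion_char} they have the same content, hence the same support $S = supp(u) = supp(v)$, and the same inversions. I would split into cases according to the position of $i,j$ relative to $S$. If some $a \in S$ satisfies $i < a < j$, then both $u$ and $v$ contain such an $a$, so both images contain the factor $21$ and thus both have a $2$-$1$ inversion. If no such $a$ exists, then $\varphi_{ij}$ retains only the letters $i$ and $j$ (as $1$ and $2$); if one of $i,j$ is absent from $S$, the image is a power of a single letter for both $u$ and $v$ and has no inversion, while if both $i,j \in S$ they are consecutive in $S$, and $\varphi_{ij}(w)$ has a $2$-$1$ inversion exactly when $w$ has a $j$-$i$ inversion, which is a genuine inversion of consecutive support elements. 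Since $u$ and $v$ share the same inversions, the presence of the $2$-$1$ inversion agrees for their images.

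Combining the two checks, $\varphi_{ij}(u)$ and $\varphi_{ij}(v)$ have the same content and the same inversions, so by Proposition~\ref{proposition:hypo_inversion_char} they are equal in $\hypo_2$; thus $\varphi_{ij}$ factors through $\equiv_{\hypo}$. The main obstacle is the inversion bookkeeping: one must notice that a single letter strictly between $i$ and $j$ already forces a $2$-$1$ inversion in the image, and then carefully confirm that, when only $i$ and $j$ survive, they are consecutive in the common support $S$, so that the surviving $2$-$1$ inversion corresponds to an actual inversion of $u$ (and $v$) that $\equiv_{\hypo}$ is guaranteed to preserve, rather than to some arbitrary subsequence. An alternative route would be to verify $\varphi_{ij}(p)=\varphi_{ij}(q)$ directly on each defining relation in $\mathcal{R}_{\hypo}$, but that requires a longer case analysis, especially for the four-letter hypoplactic relations, whereas the content-and-inversion argument disposes of all relations at once.
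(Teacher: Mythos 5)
Your proof is correct, but it takes a genuinely different route from the paper. The paper proves the lemma through the presentation $\left\langle \aset \mid \mathcal{R}_{\hypo} \right\rangle$ introduced just before this section: it checks that both sides of each plactic and hypoplactic defining relation have the same image under $\varphi_{ij}$, the point being that in every case the two images share the same content and both have a $2$-$1$ inversion, so $\mathcal{R}_{\hypo} \subseteq \ker\varphi_{ij}$. You instead work directly with the congruence itself, using the content-and-inversion characterization of Proposition~\ref{proposition:hypo_inversion_char} on both ends of the map, with a case split on how $\{i,j\}$ sits relative to the common support $S$. Your case analysis is sound, and you correctly handle the one genuinely delicate point: when only $i$ and $j$ survive, they are consecutive in $S$, so the $2$-$1$ inversion in the image corresponds to a $j$-$i$ inversion of consecutive support elements, which is exactly what $\equiv_{\hypo}$ preserves (this matters, since hypoplactically equivalent words need \emph{not} share $ba$-subsequences for non-consecutive support letters $a < b$). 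As for what each approach buys: the paper's relation check is shorter and more local than you suggest (the four families of relations are dispatched in three lines by the same ``both sides have a $2$-$1$ inversion'' observation), while your argument avoids invoking the presentation altogether and, as a bonus, essentially contains the bookkeeping that the paper only develops afterwards to prove Lemma~\ref{lemma:varphi_equiv_hypon}; indeed your case analysis is close to a proof of the stronger ``if and only if'' statement of that lemma, not just of the well-definedness asserted here.
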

	
	\begin{proof}
        Since $\hypo$ is given by the presentation $\left\langle \aset \mid \mathcal{R}_{\hypo} \right\rangle$, we just need to verify that both sides of the plactic and hypoplactic relations have the same image under $\varphi_{ij}$. Let $a,b,c,d \in \aset$. Assume, without loss of generality, that $\varphi_{ij}$ does not map any letter to $\left[\varepsilon\right]_{\hypo_2}$.
        
        If $a \leq b < c \leq d$, then either $\varphi_{ij}$ maps at least one letter to $[21]_{\hypo_2}$, or $\varphi_{ij}$ maps $a$ and $b$ to $[1]_{\hypo_2}$, and $c$ and $d$ to $[2]_{\hypo_2}$. Thus,
        \[
            \varphi_{ij} (acb) = \varphi_{ij} (cab) \quad \text{and} \quad \varphi_{ij} (acbd) = \varphi_{ij} (cadb),
        \]
        since both classes in each side of the equalities have $2$-$1$ inversions.

        If $a < b \leq c$, then either $\varphi_{ij}$ maps at least one letter to $[21]_{\hypo_2}$, or $\varphi_{ij}$ maps $a$ to $[1]_{\hypo_2}$, and $b$ and $c$ to $[2]_{\hypo_2}$. Thus,
        \[
            \varphi_{ij} (bac) = \varphi_{ij} (bca).
        \]
        since both classes in each side of the equality have $2$-$1$ inversions.
	    
        If $a < b \leq c < d$, then $\varphi_{ij}$ maps $b$ and $c$ to $[21]_{\hypo_2}$. Thus, by the same reasoning,
        \[
            \varphi_{ij} (bdac) = \varphi_{ij} (dbca).
        \]
        
		Hence, $\mathcal{R}_{\hypo} \subseteq \ker \varphi_{ij}$.
	\end{proof}
	
	Let $w \in \anset^*$, for some $n \geq 3$. Suppose $supp \left( w \right) = \{a_1 < \dots < a_k\}$, for some $k \in \nset$. Observe that, ranging $1 \leq i < k$, we can get from the maps $\varphi_{a_i a_{i+1}}$ the number of occurrences of $a_i$ and $a_{i+1}$ in $w$. Furthermore, we can also check if $w$ has an $a_{i+1}$-$a_i$ inversion: Since no element $a \in \aset$ such that $a_i < a < a_{i+1}$ occurs in $w$, each occurrence of $1$ in (a word in) $\varphi_{a_i a_{i+1}}([w]_{\hypo})$ corresponds to an occurrence of $a_i$ in $w$ and, similarly, each occurrence of $2$ in $\varphi_{a_i a_{i+1}}([w]_{\hypo})$ corresponds to an occurrence of $a_{i+1}$ in $w$. Thus, $\varphi_{a_i a_{i+1}}([w]_{\hypo})$ has a $2$-$1$ inversion if and only if $w$ has an $a_{i+1}$-$a_i$ inversion. Hence, we get the following lemma:
	
	\begin{lemma}
    \label{lemma:varphi_equiv_hypon}
		Let $u,v \in \anset^*$. Then, $u \equiv_{\hypo} v$ if and only if $\varphi_{ij}([u]_{\hypo}) = \varphi_{ij}([v]_{\hypo})$, for all $1 \leq i < j \leq n$.
	\end{lemma}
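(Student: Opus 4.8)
The plan is to prove both implications, reducing the hypoplactic congruence to the data of content and inversions via Proposition~\ref{proposition:hypo_inversion_char}, and then recovering each piece of data from a carefully chosen family of the maps $\varphi_{ij}$. The forward implication is immediate: if $u \equiv_{\hypo} v$, then $[u]_{\hypo} = [v]_{\hypo}$, and since each $\varphi_{ij}$ is a well-defined homomorphism on $\hypo$ by Lemma~\ref{lemma:phi_factor_homomorphism}, applying it to this single element yields $\varphi_{ij}([u]_{\hypo}) = \varphi_{ij}([v]_{\hypo})$ for every pair $i<j$.

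For the converse, I would assume $\varphi_{ij}([u]_{\hypo}) = \varphi_{ij}([v]_{\hypo})$ for all $1 \leq i < j \leq n$ and show, using Proposition~\ref{proposition:hypo_inversion_char}, that $u$ and $v$ have the same content and the same inversions. To recover the content, the natural tool is the subfamily of maps with consecutive indices. Since content is invariant under $\equiv_{\hypo}$, the element $\varphi_{i,i+1}([w]_{\hypo})$ of $\hypo_2$ has a well-defined number of $1$'s and of $2$'s; and because no letter strictly between $i$ and $i+1$ exists, $\varphi_{i,i+1}$ sends each $i$ to $[1]_{\hypo_2}$, each $i+1$ to $[2]_{\hypo_2}$, and every other letter to $[\varepsilon]_{\hypo_2}$, so these two counts are exactly $|w|_i$ and $|w|_{i+1}$. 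Hence $\varphi_{i,i+1}([u]_{\hypo}) = \varphi_{i,i+1}([v]_{\hypo})$ forces $|u|_i = |v|_i$ and $|u|_{i+1} = |v|_{i+1}$; letting $i$ range over $1, \dots, n-1$ covers every letter of $\anset$, giving $c(u) = c(v)$ and in particular $supp(u) = supp(v)$.

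With the supports now known to coincide, say $supp(u) = supp(v) = \{a_1 < \dots < a_k\}$, the candidate inversions of $u$ and $v$ are the very same pairs $a_{t+1}$-$a_t$. The final step applies the observation recorded just before the lemma to each consecutive pair $a_t, a_{t+1}$ of this common support: since nothing strictly between $a_t$ and $a_{t+1}$ occurs in $u$ or in $v$, the element $\varphi_{a_t a_{t+1}}([w]_{\hypo})$ has a $2$-$1$ inversion if and only if $w$ has an $a_{t+1}$-$a_t$ inversion. As $1 \leq a_t < a_{t+1} \leq n$, the pair $(a_t, a_{t+1})$ is among those for which equality of images is assumed, so $u$ has an $a_{t+1}$-$a_t$ inversion exactly when $v$ does. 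Thus $u$ and $v$ share all their inversions, and Proposition~\ref{proposition:hypo_inversion_char} yields $u \equiv_{\hypo} v$.

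The only genuine point requiring care, and the step I expect to be the main (if modest) obstacle, is the decoding of inversions: the equivalence ``$\varphi_{a_t a_{t+1}}([w]_{\hypo})$ has a $2$-$1$ inversion $\iff$ $w$ has an $a_{t+1}$-$a_t$ inversion'' is clean only because $a_t$ and $a_{t+1}$ are consecutive in the support, so no letter is sent to $[21]_{\hypo_2}$ and each $1$ (respectively $2$) in the image corresponds to a single occurrence of $a_t$ (respectively $a_{t+1}$). This is precisely why the content and support equalities must be established first, and why the maps to invoke for inversions are those indexed by support-consecutive letters rather than by arbitrary pairs.
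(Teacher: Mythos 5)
Your proof is correct and takes essentially the same approach as the paper: the forward direction is immediate from the well-definedness of $\varphi_{ij}$ on $\hypo$ (Lemma~\ref{lemma:phi_factor_homomorphism}), and the converse decodes content and inversions from the images and applies Proposition~\ref{proposition:hypo_inversion_char}, exactly as in the observation the paper records just before the lemma. Your explicit two-stage ordering --- recovering $c(u)=c(v)$ via the alphabet-consecutive maps $\varphi_{i,i+1}$ first, and only then using the support-consecutive maps $\varphi_{a_t a_{t+1}}$ to compare inversions --- is a slightly more careful writeup of that same argument, resolving the minor chicken-and-egg issue of needing a common support before the support-indexed maps can be invoked.
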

	
	\begin{proof}
	    The direct implication is trivial, since $\varphi_{ij}$ is well-defined as a map, for all $1 \leq i < j \leq n$. The proof of the converse follows from the previous observations, as well as Proposition~\ref{proposition:hypo_inversion_char}.
	\end{proof}

	For each $n \in \nset$, with $n \geq 3$, let $I_n$ be the index set
	\[
	    \left\{ (i,j): 1 \leq i < j \leq n \right\},
	\]
	and let $I := \bigcup_{n \in \nset} I_n$. Now, consider the map
	\[
	\phi_n : \hypon \longrightarrow \prod\limits_{I_n} \hypo_2,
	\]
	whose $(i,j)$-th component is given by $\varphi_{ij}([w]_{\hypo})$, for $w \in \anset^*$ and $(i,j) \in I_n$.
		
	\begin{proposition}
    \label{prop:hypon_embed_hypo2}
		The map $\phi_n$ is an embedding.
	\end{proposition}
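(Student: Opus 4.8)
The plan is to verify the two defining properties of an embedding separately, namely that $\phi_n$ is a well-defined homomorphism and that it is injective. Both follow almost immediately from the two preceding lemmas, so the substantive combinatorial work has in fact already been carried out.

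First I would check that $\phi_n$ is a well-defined homomorphism. Recall that a map into a direct product $\prod_{I_n} \hypo_2$ is a homomorphism precisely when each of its components is. Since the $(i,j)$-th component is the map $\varphi_{ij}$, which by Lemma~\ref{lemma:phi_factor_homomorphism} factors through a homomorphism $\hypo \longrightarrow \hypo_2$, the map $\phi_n$ is a homomorphism (in particular it sends the identity to the tuple of identities, since each component does). For well-definedness I would observe that the congruence $\equiv_{\hypo}$ on $\anset^*$ is the restriction of $\equiv_{\hypo}$ on $\aset^*$; hence if $u \equiv_{\hypo} v$ in $\anset^*$, the same equivalence holds in $\aset^*$, so $\varphi_{ij}([u]_{\hypo}) = \varphi_{ij}([v]_{\hypo})$ for every $(i,j) \in I_n$, and the value $\phi_n([w]_{\hypon})$ does not depend on the chosen representative $w$.

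Second, injectivity is exactly the content of Lemma~\ref{lemma:varphi_equiv_hypon}. Suppose $\phi_n([u]_{\hypon}) = \phi_n([v]_{\hypon})$ for some $u,v \in \anset^*$. Comparing components, this says $\varphi_{ij}([u]_{\hypo}) = \varphi_{ij}([v]_{\hypo})$ for all $(i,j) \in I_n$, i.e.\ for all $1 \leq i < j \leq n$. By Lemma~\ref{lemma:varphi_equiv_hypon} this forces $u \equiv_{\hypo} v$, that is $[u]_{\hypon} = [v]_{\hypon}$, so $\phi_n$ is injective. Combining the two parts, $\phi_n$ is an injective homomorphism, hence an embedding.

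There is no genuine obstacle in this argument: the real difficulty — reconstructing the content and all inversions of a word from its rank-$2$ projections $\varphi_{a_i a_{i+1}}$ — has already been resolved in Lemma~\ref{lemma:varphi_equiv_hypon}, which leans on the inversion characterization of Proposition~\ref{proposition:hypo_inversion_char}. The only point requiring any care is the book-keeping for well-definedness, ensuring that moving between $\anset^*$ and $\aset^*$ does not alter the relevant hypoplactic classes.
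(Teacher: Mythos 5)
Your proof is correct and follows essentially the same route as the paper: homomorphy is immediate because each component $\varphi_{ij}$ is a homomorphism (Lemma~\ref{lemma:phi_factor_homomorphism}), and injectivity (together with well-definedness) is exactly the equivalence in Lemma~\ref{lemma:varphi_equiv_hypon}. Your explicit treatment of well-definedness via the restriction of $\equiv_{\hypo}$ to $\anset^*$ is a minor point the paper leaves implicit, but it is the same argument.
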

	
	\begin{proof}
		It is clear that $\phi_n$ is a homomorphism. It follows from the definition of $\phi_n$ and Lemma~\ref{lemma:varphi_equiv_hypon} that, for any $u,v \in \anset^*$, we have $u \equiv_{\hypon} v$ if and only if $\phi_n([u]_{\hypo}) = \phi_n([v]_{\hypo})$, hence $\phi_n$ is an embedding.
	\end{proof}
	
	Thus, for each $n \in \nset$, we can embed $\hypon$ into a direct product of $\binom{n}{2}$ copies of $\hypo_2$.
	
	Similarly, we can embed $\hypo$ into a direct product of infinitely many copies of $\hypo_2$. Consider the map
	\[
	\phi : \hypo \longrightarrow \prod\limits_{I} \hypo_2,
	\]
	whose $(i,j)$-th component is given by $\varphi_{ij}([w]_{\hypo})$, for $w \in \aset^*$ and $(i,j) \in I$.
	
	\begin{proposition}
    \label{prop:hypo_embed_hypo2}
		The map $\phi$ is an embedding.
	\end{proposition}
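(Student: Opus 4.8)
The plan is to mirror the finite-rank argument of Proposition~\ref{prop:hypon_embed_hypo2}, reducing the infinite-rank statement to the finite-rank one by exploiting the fact that every element of $\hypo$ is represented by a word of finite support. First I would observe that $\phi$ is a homomorphism: this is immediate, since each of its components $\varphi_{ij}$ is a homomorphism $\hypo \longrightarrow \hypo_2$ by Lemma~\ref{lemma:phi_factor_homomorphism}, and a map into a direct product is a homomorphism precisely when each of its components is.

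The substance is injectivity. I would take $u,v \in \aset^*$ with $\phi([u]_{\hypo}) = \phi([v]_{\hypo})$ and aim to conclude that $u \equiv_{\hypo} v$. The key observation is that $u$ and $v$ are words of finite length, so their combined support is contained in $\anset$ for some sufficiently large $n \in \nset$; hence $u,v \in \anset^*$. Equality of the images under $\phi$ means that $\varphi_{ij}([u]_{\hypo}) = \varphi_{ij}([v]_{\hypo})$ for every $(i,j) \in I$. Restricting attention to the indices $(i,j) \in I_n \subseteq I$, that is, to all $1 \leq i < j \leq n$, I would then invoke Lemma~\ref{lemma:varphi_equiv_hypon} to deduce $u \equiv_{\hypo} v$, and therefore $[u]_{\hypo} = [v]_{\hypo}$. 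This shows that $\phi$ is injective, completing the proof that $\phi$ is an embedding.

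I do not expect any serious obstacle here: the only point requiring care is the reduction from the infinite index set $I$ to a finite subset $I_n$, which is legitimate precisely because each pair of words involves only finitely many letters of $\aset$. In effect, the projection of $\phi$ onto the $I_n$-components agrees on the submonoid $\hypon$ of $\hypo$ with the embedding $\phi_n$ of Proposition~\ref{prop:hypon_embed_hypo2}, so the injectivity of $\phi$ follows from that of the maps $\phi_n$ together with Lemma~\ref{lemma:varphi_equiv_hypon}.
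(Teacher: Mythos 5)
Your proof is correct and follows essentially the same route as the paper: both establish the homomorphism property componentwise, then reduce injectivity to the finite-rank case by choosing $n$ large enough that both words lie in $\anset^*$, noting that the $I_n$-components of $\phi$ agree with those of $\phi_n$, and concluding via Lemma~\ref{lemma:varphi_equiv_hypon}.
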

	
	\begin{proof}
		It is clear that $\phi$ is a homomorphism. Notice that, for any word $w \in \aset^*$, there must exist $n \in \nset$ such that $w \in \anset^*$. Furthermore, for $(i,j) \in I_n$, we have that the $(i,j)$-th component of $\phi([w]_{\hypo})$ is equal to the $(i,j)$-th component of $\phi_n([w]_{\hypo})$. Thus, for $u,v \in \aset^*$, if $\phi([u]_{\hypo}) = \phi([v]_{\hypo})$, then $\phi_n([u]_{\hypo}) = \phi_n([v]_{\hypo})$, for some $n \in \nset$ such that $u,v \in \anset^*$.
		
		It follows from Lemma~\ref{lemma:varphi_equiv_hypon} that, for any $u,v \in \aset^*$, we have $u \equiv_{\hypo} v$ if and only if $\phi([u]_{\hypo}) = \phi([v]_{\hypo})$, hence $\phi$ is an embedding.
	\end{proof}
		
	As such, all hypoplactic monoids of rank strictly greater than $2$ are in the variety generated by $\hypo_2$. Since $\hypo_2$ is a submonoid of $\hypo$ and $\hypon$, for any $n \geq 3$, they all generate the same variety, which we will denote by $\mathbf{V}_{\hypo}$. Thus, by Birkhoff's Theorem, we have the following result:
	
	\begin{theorem}
    \label{theorem:hypo-id-hypo_2}
		For any $n \geq 2$, $\hypo$ and $\hypon$ satisfy exactly the same identities.
	\end{theorem}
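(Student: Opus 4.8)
The plan is to deduce the statement as a direct consequence of Birkhoff's Theorem, using the fact established in the discussion preceding the statement: that all the monoids $\hypon$ with $n \geq 2$, together with $\hypo$, generate one and the same variety. The substantive work has already been carried out in Propositions~\ref{prop:hypon_embed_hypo2} and~\ref{prop:hypo_embed_hypo2}, so what remains is to assemble the variety-generation containments in both directions and then invoke Birkhoff to pass from generated varieties to equational theories.

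First I would record the two inclusions of generated varieties. On one side, Propositions~\ref{prop:hypon_embed_hypo2} and~\ref{prop:hypo_embed_hypo2} embed each $\hypon$ (for $n \geq 3$) and $\hypo$ into a direct product of copies of $\hypo_2$; since a variety is closed under direct products and submonoids, this places $\hypon$ and $\hypo$ inside $\mathbf{V}(\hypo_2)$, whence $\mathbf{V}(\hypon) \subseteq \mathbf{V}(\hypo_2)$ and $\mathbf{V}(\hypo) \subseteq \mathbf{V}(\hypo_2)$. On the other side, because $\hypo_2$ is a submonoid of every $\hypon$ with $n \geq 2$ and of $\hypo$, closure under submonoids gives $\hypo_2 \in \mathbf{V}(\hypon)$ and $\hypo_2 \in \mathbf{V}(\hypo)$, hence the reverse inclusions $\mathbf{V}(\hypo_2) \subseteq \mathbf{V}(\hypon)$ and $\mathbf{V}(\hypo_2) \subseteq \mathbf{V}(\hypo)$. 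Combining the two sides yields
\[
\mathbf{V}(\hypon) = \mathbf{V}(\hypo_2) = \mathbf{V}(\hypo)
\]
for every $n \geq 2$, this common variety being $\mathbf{V}_{\hypo}$ (the case $n = 2$ being immediate).

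Second, I would convert this equality of generated varieties into equality of equational theories. By Birkhoff's Theorem, $\mathbf{V}(M)$ is the equational class cut out by the identities that $M$ satisfies, so the equational theory of $\mathbf{V}(M)$ coincides with the set of identities satisfied by $M$: every identity holding in $M$ holds throughout $\mathbf{V}(M)$, and conversely any identity holding throughout $\mathbf{V}(M)$ holds in particular in $M \in \mathbf{V}(M)$. Applying this to $M = \hypo$ and to $M = \hypon$, and using $\mathbf{V}(\hypo) = \mathbf{V}(\hypon)$, the identities satisfied by $\hypo$ and those satisfied by $\hypon$ are both equal to the equational theory of $\mathbf{V}_{\hypo}$, and therefore coincide.

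I do not anticipate a genuine obstacle: the theorem is essentially a bookkeeping corollary of the embedding results, with all the real content residing in the construction of the maps $\varphi_{ij}$ and the separation argument of Lemma~\ref{lemma:varphi_equiv_hypon}. The only point requiring a little care is that the corollary of Birkhoff quoted in the preliminaries is phrased in one direction only (identities of $M$ hold throughout $\mathbf{V}(M)$), so I must supply the trivial reverse observation that $M$ itself lies in $\mathbf{V}(M)$ in order to obtain equality, rather than mere containment, of the two equational theories.
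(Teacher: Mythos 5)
Your proposal is correct and follows exactly the paper's own route: the paper likewise deduces the theorem from Propositions~\ref{prop:hypon_embed_hypo2} and~\ref{prop:hypo_embed_hypo2} together with the fact that $\hypo_2$ is a submonoid of each $\hypon$ and of $\hypo$, concluding that all these monoids generate the common variety $\mathbf{V}_{\hypo}$ and hence, by Birkhoff's Theorem, satisfy the same identities. Your only addition is to spell out the two variety inclusions and the passage from equal varieties to equal equational theories, details the paper leaves implicit in the paragraph preceding the theorem.
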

	
	Another consequence of $\mathbf{V}_{\hypo}$ being generated by $\hypo_2$ is the following:
	
	\begin{proposition}
	    The basis rank of $\mathbf{V}_{\hypo}$ is $2$.
	\end{proposition}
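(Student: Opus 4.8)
The plan is to establish $r_b(\mathbf{V}_{\hypo}) = 2$ by proving the two inequalities $r_b(\mathbf{V}_{\hypo}) \leq 2$ and $r_b(\mathbf{V}_{\hypo}) \geq 2$ separately, using the characterization of basis rank recalled in Subsection~\ref{subsection:varieties_identities_and_bases}.

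For the upper bound, I would invoke the discussion immediately preceding this proposition: the variety $\mathbf{V}_{\hypo}$ is generated by $\hypo_2$, which is generated by the two letters $1$ and $2$. Since $\mathbf{V}_{\hypo}$ is therefore generated by a monoid with a finite number of generators, its basis rank coincides with the minimal number of generators of such a generating monoid, and so $r_b(\mathbf{V}_{\hypo}) \leq 2$.

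For the lower bound, the key observation is that any monoid generated by a single element is commutative, since distinct powers of one element commute. Hence the $\mathbf{V}_{\hypo}$-free monoid $\mathcal{F}_1(\mathbf{V}_{\hypo})$ on one generator is monogenic, thus commutative, and in particular satisfies the identity $xy \approx yx$. Were $\mathbf{V}_{\hypo}$ generated by $\mathcal{F}_1(\mathbf{V}_{\hypo})$, then by Birkhoff's Theorem every monoid in $\mathbf{V}_{\hypo}$ would satisfy $xy \approx yx$. But $\hypo_2 \in \mathbf{V}_{\hypo}$ is noncommutative: for instance $[12]_{\hypo_2} \neq [21]_{\hypo_2}$, since the latter has a $2$-$1$ inversion while the former does not, so the two are distinguished by Proposition~\ref{proposition:hypo_inversion_char}. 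This contradiction shows that $\mathbf{V}_{\hypo}$ is not generated by $\mathcal{F}_1(\mathbf{V}_{\hypo})$, whence $r_b(\mathbf{V}_{\hypo}) \geq 2$ (note also $r_b(\mathbf{V}_{\hypo}) \neq 0$, as $\mathcal{F}_0(\mathbf{V}_{\hypo})$ is the trivial monoid, which does not generate $\mathbf{V}_{\hypo}$).

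Combining the two bounds yields $r_b(\mathbf{V}_{\hypo}) = 2$. I expect the only subtle point to be the lower bound, specifically the reduction from ``$\mathbf{V}_{\hypo}$ is generated by $\mathcal{F}_1(\mathbf{V}_{\hypo})$'' to the identity $xy \approx yx$ holding throughout $\mathbf{V}_{\hypo}$; once the monogenic free object is seen to be commutative, everything else is a direct application of the definitions together with Birkhoff's Theorem.
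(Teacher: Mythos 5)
Your proposal is correct and follows essentially the same route as the paper: the upper bound comes from $\mathbf{V}_{\hypo}$ being generated by the two-generated monoid $\hypo_2$, and the lower bound from the fact that monogenic monoids are commutative while the hypoplactic monoid is not, so no one-generated monoid (in particular $\mathcal{F}_1(\mathbf{V}_{\hypo})$) can generate the variety. Your version is merely slightly more explicit, phrasing the lower bound via the free object $\mathcal{F}_1(\mathbf{V}_{\hypo})$ and exhibiting the witness $[12]_{\hypo_2} \neq [21]_{\hypo_2}$, where the paper simply notes that $\hypo$ is noncommutative.
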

	
	\begin{proof}
	    Since $\mathbf{V}_{\hypo}$ is generated by $\hypo_2$, and $\hypo_2$ is defined by a presentation where the alphabet has two generators, then $r_b \left(\mathbf{V}_{\hypo}\right)$ is less than or equal to $2$.
	    
	    On the other hand, notice that any monoid generated by a single element is commutative. Since $\hypo$ is not commutative, $\mathbf{V}_{\hypo}$ cannot be generated by any monoid which is itself generated by a single element. As such, $r_b \left(\mathbf{V}_{\hypo}\right)$ is strictly greater than $1$.
	    
	    Hence, the basis rank of $\mathbf{V}_{\hypo}$ is $2$.
	\end{proof}

	
	\section{Identities and bases}
	\label{section:identities_and_bases}
	
	
	In this section, we obtain a complete characterization of the identities satisfied by $\hypo$, a finite basis for $\mathbf{V}_{\hypo}$ and also its axiomatic rank. 
	
	\subsection{Characterization of the identities satisfied by the hypoplactic monoid}
    \label{subsection:identities_hypo}
	
	Due to Theorem \ref{theorem:hypo-id-hypo_2}, the identities satisfied by $\hypo$ and those satisfied by $\hypo_2$ are exactly the same. We shall use $\hypo_2$ to obtain a characterization of the identities satisfied by $\hypo$.
		
	Observe that, for each element of $\hypo_2$, there is at most one other distinct element of $\hypo_2$ which has the same content as it. Indeed, for an element of $\hypo_2$, with fixed content and support $\{1,2\}$, either it has a $2$-$1$ inversion or not. 
	
	\begin{theorem}
    \label{theorem:hypo_identities}
		The identities $u \approx v$ satisfied by $\hypo$ are exactly the balanced identities such that, for any variables $x,y$ that occur in $u$ and $v$, $u$ admits $xy$ as a subsequence if and only if $v$ does too.
	\end{theorem}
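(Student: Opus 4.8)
The plan is to invoke Theorem~\ref{theorem:hypo-id-hypo_2} to reduce the problem to $\hypo_2$, so that throughout I work with evaluations $\psi \colon X^* \to \hypo_2$, and then to exploit the observation recorded just before the statement: by Proposition~\ref{proposition:hypo_inversion_char}, an element of $\hypo_2$ is determined by its content together with whether or not it has a $2$-$1$ inversion. Thus, for an evaluation $\psi$, the equality $\psi(u) = \psi(v)$ amounts to checking that $\psi(u)$ and $\psi(v)$ have the same content and that one has a $2$-$1$ inversion if and only if the other does. I would then prove the two implications separately.

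For necessity, suppose $u \approx v$ holds in $\hypo_2$. Balancedness follows from the fact noted in Subsection~\ref{subsection:varieties_identities_and_bases} that $\hypo$ contains a free monogenic submonoid; alternatively, evaluating one fixed variable to $[1]_{\hypo_2}$ and every other variable to $[\varepsilon]_{\hypo_2}$ forces $|u|_x = |v|_x$ for each $x$. For the subsequence condition I argue by contraposition: if some ordered pair of distinct variables $(x,y)$ occurs as a subsequence of $u$ but not of $v$, I evaluate $x \mapsto [2]_{\hypo_2}$, $y \mapsto [1]_{\hypo_2}$, and every other variable to $[\varepsilon]_{\hypo_2}$. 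Then $\psi(u)$ is the class of the word obtained from $u$ by turning each $x$ into $2$, each $y$ into $1$, and deleting the rest; this word has a $2$-$1$ inversion precisely when some $x$ precedes some $y$, that is, exactly when $u$ admits $xy$ as a subsequence. Since $\psi(u)$ and $\psi(v)$ then share the same content but differ in inversion status, they are distinct, contradicting the identity. (The degenerate case $x = y$ concerns only whether a variable occurs at least twice, which is governed by content and so is already covered by balancedness.)

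For sufficiency, assume $u \approx v$ is balanced and satisfies the subsequence condition, and take an arbitrary evaluation $\psi \colon X^* \to \hypo_2$. Writing $u = x_1 \cdots x_n$ and choosing for each $\psi(x_i)$ a representative word $w_i$ over $\{1,2\}$, the element $\psi(u)$ is the class of the concatenation $w_1 \cdots w_n$. Its content is the fixed linear combination of the contents $c(\psi(x))$ with coefficients $|u|_x$, so balancedness gives $c(\psi(u)) = c(\psi(v))$. For inversions, the key structural point is that $w_1 \cdots w_n$ has a $2$-$1$ inversion if and only if either some factor $\psi(x_i)$ has an internal $2$-$1$ inversion, or there exist positions $i \leq j$ with $2 \in supp(\psi(x_i))$ and $1 \in supp(\psi(x_j))$; both of these are determined by the classes $\psi(x_i)$ alone, since supports and internal inversion status are class invariants. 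The first possibility depends only on which variables occur, hence only on $supp(u) = supp(v)$. The second is equivalent to the existence of variables $p, q$ occurring in $u$ with $2 \in supp(\psi(p))$, $1 \in supp(\psi(q))$, and $pq$ a subsequence of $u$; by the subsequence hypothesis (and balancedness for the case $p = q$) this holds for $u$ if and only if it holds for $v$. Hence $\psi(u)$ and $\psi(v)$ agree in content and in inversion status, so $\psi(u) = \psi(v)$, and the identity holds.

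I expect the main obstacle to be exactly this structural characterisation in the sufficiency direction: one must cleanly separate the inversions internal to the factors from those created across factors by the mere presence of a $2$ in an earlier factor and a $1$ in a later one, and then verify that each contribution is a function only of the data preserved between $u$ and $v$, namely the support and the set of ordered two-variable subsequences. Once this separation is isolated, the argument collapses to Proposition~\ref{proposition:hypo_inversion_char} together with the fact that a content and an inversion status pin down an element of $\hypo_2$ uniquely.
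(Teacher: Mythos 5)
Your necessity argument coincides with the paper's (the evaluation $x \mapsto [2]_{\hypo_2}$, $y \mapsto [1]_{\hypo_2}$, all other variables to $[\varepsilon]_{\hypo_2}$), but your sufficiency argument takes a genuinely different route. The paper argues by contradiction: assuming $\psi(u) \neq \psi(v)$, it uses the fact that the inversion-free class $\psi(v)$ is the singleton $\{1^{\alpha}2^{\beta}\}$ to force a decomposition $v = v_1 v_2$ or $v = v_1 z v_2$ according to the supports of the images of the variables, then invokes the subsequence hypothesis to force $u$ into the same shape $u_1 u_2$ or $u_1 z u_2$ with $c(u_1)=c(v_1)$ and $c(u_2)=c(v_2)$, and concludes $\psi(u)=\psi(v)$ because each block evaluates into a commutative one-generator submonoid. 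You instead argue directly: you characterize when $\psi(u)$ has a $2$-$1$ inversion --- either some single factor $\psi(x)$ has an internal inversion (a condition depending only on $supp(u)$), or a factor whose image contains $2$ strictly precedes a factor whose image contains $1$ (a condition depending only on which ordered pairs $pq$, including $p=q$, occur as subsequences of $u$) --- and then observe that both conditions are functions of exactly the data that balancedness and the subsequence hypothesis force $u$ and $v$ to share. Your route avoids the case analysis on the shape of $v$, and it makes transparent why the equational theory is the intersection of the balanced identities with the theory of words sharing all subsequences of length at most $2$, which is essentially the content of Corollary~\ref{corollary:varietal_join}; the paper's route is more hands-on and leans only on the rigidity of the inversion-free element.

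One slip to correct: your displayed structural claim allows positions $i \leq j$, and with $i = j$ it is false --- a factor whose image has support $\{1,2\}$ but no internal inversion, e.g.\ $\psi(x_i) = [12]_{\hypo_2}$, contributes no inversion to the concatenation, yet satisfies your condition. The cross-factor condition must require $i < j$. This does not propagate: your subsequent variable-level reformulation ($pq$ a subsequence of $u$, where for $p = q$ this means two occurrences of $p$, handled by balancedness) is precisely the strict-inequality version, so the remainder of your argument stands once the inequality is corrected.
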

	
	\begin{proof}		
		We first prove by contradiction that an identity satisfied by $\hypo_2$ must satisfy the previously mentioned conditions. Suppose $u \approx v$ is an identity satisfied by $\hypo_2$. Since $\hypo$ contains the free monogenic submonoid, we know that any identity satisfied by $\hypo$ must be a balanced identity. Thus, we assume $u \approx v$ is a balanced identity. 
		
		Suppose, in order to obtain a contradiction, that there exist variables $x,y$ occurring in $u$ and $v$, such that $u$ admits $xy$ as a subsequence, but $v$ does not. Observe that, since both $x$ and $y$ occur in $v$, then $v$ must admit $yx$ as a subsequence. 
		
		Then, taking the evaluation $\psi$ of $X$ in $\hypo_2$ such that $\psi (x) = [2]_{\hypo_2}$, $\psi (y) = [1]_{\hypo_2}$ and $\psi (z) = [\varepsilon]_{\hypo_2}$, for all other variables $z$, we have
		\[
		\psi (u) = [2^{\alpha} 1^{\beta}]_{\hypo_2} \text{ and } \psi (v) = [1^{\beta} 2^{\alpha}]_{\hypo_2}.
		\]
		
		Thus, by Proposition \ref{proposition:hypo_inversion_char}, we have that $\psi (u) \neq \psi (v)$, which contradicts our hypothesis that $u \approx v$ is an identity.
			
		We now prove by contradiction that an identity which satisfies the previously mentioned conditions must also be satisfied by $\hypo_2$. Suppose that $u \approx v$ is a balanced identity, such that, for all variables $x,y$ which occur in $u$ and $v$, $u$ admits $xy$ as a subsequence if and only if $v$ does too. Suppose, in order to obtain a contradiction, that there is some evaluation $\psi$ of $X$ in $\hypo_2$ such that $\psi(u) \neq \psi(v)$.
		
		Notice that, since $u \approx v$ is a balanced identity, then $\psi(u)$ and $\psi(v)$ have the same content. Since we assumed that $\psi(u) \neq \psi(v)$, then, by Proposition \ref{proposition:hypo_inversion_char}, we have that $supp \left( \psi(u) \right) = supp \left( \psi(v) \right) = \{ 1,2 \}$, and $\psi(u)$ has a $2$-$1$ inversion and $\psi(v)$ does not, or vice-versa.
		
		We assume, without loss of generality, that $\psi(u)$ has a $2$-$1$ inversion and $\psi(v)$ does not. Note that $\psi(v)$, as a word class, has only one word, of the form $1^{\alpha} 2^{\beta}$, for some $\alpha, \beta \geq 1$. Then, $v$ must be of the form $v = v_1 v_2$ or $v = v_1 z v_2$, with $z \in X$, where: for each variable $x$ which occurs in $v_1$, $\psi (x)$ has support $\{1\}$; for each variable $y$ which occurs in $v_2$, $\psi (y)$ has support $\{2\}$; and $\psi (z)$ has support $\{1,2\}$. 
		
		Notice that $z$ is a variable that occurs in neither $v_1$ nor $v_2$, and that no variable occurs simultaneously in $v_1$ and $v_2$. Also notice that, for any variables $x$, which occurs in $v_1$, and $y$, which occurs in $v_2$, $v$ admits $xz, xy$ and $zy$ as subsequences (if there is a variable $z$ in the previously mentioned conditions), but not $zx, yx$ nor $yz$. Thus, by the conditions imposed upon $u \approx v$, $u$ must be of the form $u = u_1 u_2$ or $u = u_1 z u_2$, where $c(u_1)=c(v_1)$ and $c(u_2)=c(v_2)$. Hence, $\psi (u_1) = \psi (v_1)$ and $\psi (u_2) = \psi (v_2)$, by the observations in the previous paragraph.
		
		Thus, we either have
		\[
		\psi(u) = \psi(u_1) \psi(u_2) = \psi(v_1) \psi(v_2) = \psi(v)
		\]
		or
		\[
		\psi(u) = \psi(u_1) \psi(z) \psi(u_2) = \psi(v_1) \psi(z) \psi(v_2) = \psi(v).
		\]
		
		Thus, we have reached a contradiction, hence, there is no evaluation $\psi$ of $X$ in $\hypo_2$ such that $\psi(u) \neq \psi(v)$. Thus, $u \approx v$ is an identity satisfied by $\hypo_2$.
	\end{proof}
	
	With this characterization, we recover as a corollary the following result:
	
	\begin{corollary}[{\cite[Proposition~12]{cm_identities}}]
	\label{corollary:cm_identities}
		The following non-trivial identities are satisfied by $\hypo$:
		\begin{center}
			$xyxy \approx xyyx \approx yxxy \approx yxyx$;\\
			$xxyx \approx xyxx$.
		\end{center}
		Furthermore, up to equivalence (that is, up to renaming variables or swapping both sides of the identities), these are the shortest non-trivial identities satisfied by $\hypo$.
	\end{corollary}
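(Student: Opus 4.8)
The plan is to derive both assertions directly from the characterization in Theorem~\ref{theorem:hypo_identities}, after reformulating it combinatorially. To each word of a fixed content I associate its \emph{inversion data}, namely the set of ordered pairs $(x,y)$ of distinct variables such that the word admits $xy$ as a subsequence. Since $\hypo$ contains a free monogenic submonoid, every identity it satisfies is balanced, so the supports of the two sides coincide; Theorem~\ref{theorem:hypo_identities} then says precisely that a nontrivial satisfied identity $u \approx v$ corresponds to a pair of distinct words of the same content having identical inversion data. The task thus reduces to grouping the words of each content by their inversion data and searching for classes that contain at least two words.

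For the first assertion I would simply check that each displayed identity fits this description. The four words $xyxy$, $xyyx$, $yxxy$, $yxyx$ all have content $x^2y^2$, and each of them admits both $xy$ and $yx$ as subsequences, so they share the same inversion data; likewise $xxyx$ and $xyxx$ both have content $x^3y$, and each admits both $xy$ and $yx$. Hence all the displayed identities are balanced with matching inversion data, and so are satisfied by $\hypo$.

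For minimality, I would first rule out length at most $3$ by verifying, content by content, that no two distinct words share the same inversion data. For content $x^2y$ the three words $xxy$, $xyx$, $yxx$ have inversion data $\{xy\}$, $\{xy,yx\}$, $\{yx\}$ respectively, which are pairwise distinct; for content $xyz$ distinct permutations induce distinct linear orders and hence distinct inversion data; and the contents $x^3$ together with the length-$1$ and length-$2$ contents are handled the same way. Thus every class is a singleton and there is no nontrivial identity of length at most $3$. It then remains to enumerate the length-$4$ contents. For content $xyzw$ a word is determined by the induced linear order, so again all classes are singletons. The only case requiring genuine work is content $x^2yz$: fixing the relative order of $y$ and $z$ (which is recorded in the inversion data) and classifying a word by how the two copies of $x$ are distributed among the three gaps determined by $y$ and $z$, a short case analysis shows that the resulting inversion data distinguishes all such words, so these classes too are singletons.

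Consequently the only length-$4$ contents yielding a nonsingleton class are $x^3y$, whose unique nonsingleton class is $\{xxyx, xyxx\}$, and $x^2y^2$, whose unique nonsingleton class is $\{xyxy, xyyx, yxxy, yxyx\}$. These two classes produce exactly the displayed identities, up to swapping the two sides and renaming $x$ and $y$; since no nontrivial identity is shorter, these are indeed the shortest nontrivial identities satisfied by $\hypo$, which recovers \cite[Proposition~12]{cm_identities}. The main obstacle is the content-$x^2yz$ case, where one must verify carefully that no coincidence of inversion data occurs among the twelve words; the remaining cases reduce to routine singleton checks.
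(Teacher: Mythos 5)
Your proposal is correct and takes essentially the same route as the paper: the paper presents this statement as an immediate consequence of Theorem~\ref{theorem:hypo_identities}, giving no details beyond the citation to the original source. Your content-by-content enumeration (including the $x^2yz$ case, where the twelve inversion-data sets are indeed pairwise distinct) simply fills in the routine verification that the paper leaves implicit.
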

	
	We also easily obtain some examples of important non-trivial identities satisfied by $\hypo$:
	
	\begin{example}
	\label{example:hypo_basis_identities}
		The following non-trivial identities are satisfied by $\hypo$:
		\begin{align}
            xyzxty \approx yxzxty; \tag{L} \\
            xzxytx \approx xzyxtx; \tag{M} \\
            xzytxy \approx xzytyx. \tag{R}
        \end{align}
	\end{example}
	
    It is well known that the set of all balanced identities is the equational theory of the variety $\mathbf{Com}$ of all commutative monoids, which is generated by the free monogenic monoid. On the other hand, the set $J_2$ of all identities $u \approx v$ where $u$ and $v$ share exactly the same subsequences of length at most 2 is the equational theory of the pseudovariety $\mathbf{J}_{2}$, which, due to Eilenberg's correspondence (see \cite{Eilenberg_book76,pin1986varieties}), corresponds to the class of all piecewise testable languages of height $2$ (see \cite{simon_thesis}). This pseudovariety is generated by $\mathcal{C}_3$, the 5-element monoid of all order preserving and extensive transformations of the chain $1 < 2 < 3$ (see \cite{volkov_reflexive_relations}). Thus, the equational theory of $\mathbf{V} \left( \mathcal{C}_3 \right)$, the variety generated by $\mathcal{C}_3$, is $J_2$. It is easy to see that the equational theory of $\mathbf{V}_{\hypo}$ is the intersection of the set of all balanced identities and the set $J_2$. As such, we have the following corollary of Theorem~\ref{theorem:hypo_identities}, suggested by the anonymous referee:
    
	\begin{corollary}
	\label{corollary:varietal_join}
        $\mathbf{V}_{\hypo}$ is the varietal join $\mathbf{Com} \vee \mathbf{V} \left( \mathcal{C}_3 \right)$, and is generated by the free monogenic monoid and the monoid $\mathcal{C}_3$.
    \end{corollary}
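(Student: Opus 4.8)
The plan is to exploit the Galois correspondence between monoid varieties and equational theories furnished by Birkhoff's Theorem: a variety is determined by the set of identities it satisfies, so it suffices to show that $\mathbf{V}_{\hypo}$ and $\mathbf{Com} \vee \mathbf{V}(\mathcal{C}_3)$ have equal equational theories. Write $\mathrm{Eq}(\mathbf{K})$ for the equational theory of a class $\mathbf{K}$. I would first make precise the remark preceding the corollary, namely that $\mathrm{Eq}(\mathbf{V}_{\hypo})$ equals $\mathrm{Eq}(\mathbf{Com}) \cap \mathrm{Eq}(\mathbf{V}(\mathcal{C}_3))$, i.e.\ the intersection of the balanced identities with $J_2$. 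By Theorem~\ref{theorem:hypo_identities}, $\mathrm{Eq}(\mathbf{V}_{\hypo})$ consists of the balanced identities $u \approx v$ for which a pair of variables $xy$ is a subsequence of $u$ exactly when it is one of $v$. For a balanced identity, the empty and length-$1$ subsequences, together with the subsequences of the form $xx$, already coincide on the two sides, since they are governed by the common content; hence the remaining condition on distinct pairs is precisely the requirement that $u$ and $v$ share all subsequences of length at most $2$, that is, membership in $J_2$.

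Next I would record the general lattice fact that, for varieties $\mathbf{U}$ and $\mathbf{W}$, one has $\mathrm{Eq}(\mathbf{U} \vee \mathbf{W}) = \mathrm{Eq}(\mathbf{U}) \cap \mathrm{Eq}(\mathbf{W})$. The inclusion from left to right is immediate, since $\mathbf{U}, \mathbf{W} \subseteq \mathbf{U} \vee \mathbf{W}$. For the reverse inclusion, the class of all monoids satisfying every identity in $\mathrm{Eq}(\mathbf{U}) \cap \mathrm{Eq}(\mathbf{W})$ is an equational class, hence a variety, which contains both $\mathbf{U}$ and $\mathbf{W}$ and therefore contains $\mathbf{U} \vee \mathbf{W}$; thus every identity of the intersection holds throughout the join. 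Applying this with $\mathbf{U} = \mathbf{Com}$ and $\mathbf{W} = \mathbf{V}(\mathcal{C}_3)$, and using the identifications of $\mathrm{Eq}(\mathbf{Com})$ and $\mathrm{Eq}(\mathbf{V}(\mathcal{C}_3))$ recalled above, we obtain
\[
\mathrm{Eq}(\mathbf{Com} \vee \mathbf{V}(\mathcal{C}_3)) = \mathrm{Eq}(\mathbf{Com}) \cap \mathrm{Eq}(\mathbf{V}(\mathcal{C}_3)) = \mathrm{Eq}(\mathbf{V}_{\hypo}),
\]
and Birkhoff's Theorem upgrades this equality of equational theories to the desired equality of varieties.

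For the generation statement I would use the identity $\mathbf{V}(\mathbf{K}_1) \vee \mathbf{V}(\mathbf{K}_2) = \mathbf{V}(\mathbf{K}_1 \cup \mathbf{K}_2)$, which holds because the right-hand side is the smallest variety containing both classes. Since $\mathbf{Com}$ is generated by the free monogenic monoid and $\mathbf{V}(\mathcal{C}_3)$ is generated by $\mathcal{C}_3$, it follows at once that $\mathbf{V}_{\hypo} = \mathbf{Com} \vee \mathbf{V}(\mathcal{C}_3)$ is generated by the class consisting of the free monogenic monoid together with $\mathcal{C}_3$. The argument is essentially formal, resting on the lattice anti-isomorphism between varieties and equational theories; the one point requiring genuine care, and the main obstacle, is the first step, verifying that for balanced identities the subsequence condition of Theorem~\ref{theorem:hypo_identities} coincides exactly with membership in $J_2$, so that $\mathrm{Eq}(\mathbf{V}_{\hypo})$ really is the full intersection of the two theories and not merely contained in it.
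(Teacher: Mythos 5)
Your proposal is correct and follows essentially the same route as the paper: the paper's justification (in the paragraph preceding the corollary) is precisely the identification $\mathrm{Eq}(\mathbf{V}_{\hypo}) = \mathrm{Eq}(\mathbf{Com}) \cap J_2$ via Theorem~\ref{theorem:hypo_identities}, combined with the standard facts that the equational theory of a join is the intersection of the equational theories and that Birkhoff's Theorem converts equality of theories into equality of varieties. You have merely made explicit the steps the paper leaves implicit (the lattice fact, the check that for balanced identities the distinct-pair subsequence condition is equivalent to membership in $J_2$, and the generation statement via $\mathbf{V}(\mathbf{K}_1) \vee \mathbf{V}(\mathbf{K}_2) = \mathbf{V}(\mathbf{K}_1 \cup \mathbf{K}_2)$), all of which are sound.
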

	
	An alternative characterization of the identities satisfied by $\hypo$ is the following:
	
	\begin{corollary}
        The identities $u \approx v$ satisfied by $\hypo$ are balanced identities such that, for any variables $x,y$ that occur in $u$ and $v$, $u|_{x,y} \approx v|_{x,y}$ is an identity satisfied by $\hypo$, where $u|_{x,y}$ and $v|_{x,y}$ are obtained from $u$ and $v$, respectively, by eliminating every occurrence of a variable other than $x$ or $y$.
    \end{corollary}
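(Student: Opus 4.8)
The plan is to derive this alternative characterization directly from Theorem~\ref{theorem:hypo_identities}, by observing that the subsequence condition appearing there can be tested one pair of variables at a time, and that passing from a word to its restriction onto a pair of variables does not disturb that condition. In effect, the corollary simply says that the global condition of Theorem~\ref{theorem:hypo_identities} factors through its two-variable instances.

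First I would record the key elementary observation. For any word $w$ and any two variables $x,y$, whether $w$ admits $xy$ as a subsequence depends only on the relative order of the occurrences of $x$ and $y$ in $w$. Deleting from $w$ every occurrence of any variable other than $x$ or $y$ — which is exactly the passage from $w$ to $w|_{x,y}$ — preserves this relative order, so $w$ admits $xy$ as a subsequence if and only if $w|_{x,y}$ does. The same holds for $yx$, while admissibility of $xx$ and of $yy$ depends only on $|w|_x$ and $|w|_y$, which are likewise unchanged by the restriction. I would also note that the restriction removes neither $x$ nor $y$, so a variable $x$ occurs in $w$ if and only if it occurs in $w|_{x,y}$.

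For the forward implication, I would assume $u \approx v$ is satisfied by $\hypo$ and apply Theorem~\ref{theorem:hypo_identities}: the identity is balanced, and for every pair $x,y$ occurring in it, $u$ admits $xy$ as a subsequence if and only if $v$ does. Fixing such a pair, I must check that $u|_{x,y} \approx v|_{x,y}$, viewed as an identity over the two-letter alphabet $\{x,y\}$, meets the hypotheses of Theorem~\ref{theorem:hypo_identities}. Balancedness is inherited, since restriction changes neither $|u|_x,|u|_y$ nor $|v|_x,|v|_y$. The subsequence condition must then be verified for each of the ordered pairs drawn from $\{x,y\}$, namely $xx$, $yy$, $xy$ and $yx$: the first two follow from balancedness, and the latter two follow from the preceding observation together with the subsequence condition already known for $u \approx v$. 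Hence $u|_{x,y} \approx v|_{x,y}$ is satisfied by $\hypo$. For the converse, I would assume $u \approx v$ is balanced and that $u|_{x,y} \approx v|_{x,y}$ is satisfied by $\hypo$ for every pair $x,y$ occurring in it, and verify the two conditions of Theorem~\ref{theorem:hypo_identities} for $u \approx v$ itself. Balancedness is given; and for a fixed pair $x,y$, applying Theorem~\ref{theorem:hypo_identities} to the two-variable identity $u|_{x,y} \approx v|_{x,y}$ shows that $u|_{x,y}$ admits $xy$ as a subsequence if and only if $v|_{x,y}$ does, which transfers back to $u$ and $v$ by the observation.

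I do not expect a serious obstacle, since the argument is purely a reduction via Theorem~\ref{theorem:hypo_identities}. The only points requiring care are the bookkeeping over all four ordered pairs in the application of the theorem to the restricted identity, and confirming that the clause ``occurs in $u$ and $v$'' is preserved under restriction — which holds precisely because the restriction deletes neither of the two variables being isolated.
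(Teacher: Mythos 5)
Your proposal is correct and follows essentially the same route as the paper: the substantive (converse) direction is exactly the paper's argument --- restriction to $\{x,y\}$ preserves admissibility of $xy$ and $yx$ as subsequences, so Theorem~\ref{theorem:hypo_identities} applied to each restricted identity $u|_{x,y} \approx v|_{x,y}$ transfers the subsequence condition back to $u \approx v$ itself. The only divergence is the forward direction, which the paper dispatches as immediate from the definition of satisfaction (any evaluation of the restricted identity extends to one of $u \approx v$ by sending the erased variables to the identity element, an argument valid in any monoid), whereas you re-verify the conditions of Theorem~\ref{theorem:hypo_identities} for the restricted two-variable identity; both work, yours being marginally longer and specific to $\hypo$.
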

    
    \begin{proof} 
        The proof of the implication is trivial, as it follows from the definition of an identity satisfied by a monoid.
        
        Suppose then that the identity $u \approx v$ is balanced and satisfies the following property: For any variables $x,y$ that occur in $u$ and $v$, $u|_{x,y} \approx v|_{x,y}$ is an identity satisfied by $\hypo$.
        
        By Theorem \ref{theorem:hypo_identities}, to prove that $u \approx v$ is satisfied by $\hypo$, we only need to show that $u$ admits $xy$ (or $yx$) as a subsequence if and only if $v$ does too, for any variables $x,y$ that occur in $u$ and $v$.
        
        Let $x,y$ be variables that occur in $u$ and $v$. Hence, by the hypothesis, we have that $u|_{x,y} \approx v|_{x,y}$ is an identity satisfied by $\hypo$. By Theorem \ref{theorem:hypo_identities}, we know that $u|_{x,y}$ admits $xy$ (or $yx$) as a subsequence if and only if $v|_{x,y}$ does too. 
        
        Notice that $u|_{x,y}$ is the unique subsequence of $u$ with the same number of occurrences of $x$ and $y$. Therefore, we can conclude that $u|_{x,y}$ admits $xy$ (or $yx$) as a subsequence if and only if $u$ does too. The same can be stated about $v|_{x,y}$ and $v$.
        
        Therefore, $u$ admits $xy$ (or $yx$) as a subsequence if and only if $v$ does too. Hence, we conclude that  $u \approx v$ is an identity satisfied by $\hypo$.
	\end{proof}	
	
	It is very easy to verify if a balanced identity, over a two-letter alphabet, is satisfied by $\hypo$, by the following complete characterization:
	
	\begin{corollary}
		The non-trivial identities, over the two-letter alphabet $\{x,y\}$, satisfied by $\hypo$ are balanced identities such that neither side of the identity is of the form $x^a y^b$ or $y^b x^a$, for some $a,b \in \nset$.
	\end{corollary}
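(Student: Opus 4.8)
The plan is to reduce the whole statement to the two-variable case of Theorem~\ref{theorem:hypo_identities}. First I would dispose of the degenerate situations: since $\hypo$ contains a free monogenic submonoid, every non-trivial identity it satisfies is balanced, so I may assume $|u|_x=|v|_x=a$ and $|u|_y=|v|_y=b$. If one of the variables, say $y$, fails to occur in $u$, then balancedness forces $|v|_y=0$ as well, so both sides are the same power $x^a$ and the identity is trivial; hence for a non-trivial identity both variables genuinely occur and $a,b\geq 1$. This also explains why words that are powers of a single letter need not appear in the statement.

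Next I would translate the subsequence condition of Theorem~\ref{theorem:hypo_identities} into concrete combinatorics of words over $\{x,y\}$, applying it to the ordered pairs $(x,y)$ and $(y,x)$ to obtain the two conditions on the subsequences $xy$ and $yx$. With $a,b\geq 1$, a word of content $(a,b)$ fails to admit $xy$ as a subsequence precisely when every $x$ follows every $y$, i.e.\ when it equals $y^bx^a$; dually it fails to admit $yx$ precisely when it equals $x^ay^b$. Since these two cannot hold simultaneously, each such word lies in exactly one of three classes: $x^ay^b$ (admits $xy$ but not $yx$), $y^bx^a$ (admits $yx$ but not $xy$), and all remaining words (admitting both). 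The key observation is that this third class is exactly the set of words of content $(a,b)$ that are neither $x^ay^b$ nor $y^bx^a$.

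I would then read off the corollary directly. By Theorem~\ref{theorem:hypo_identities}, a balanced $u\approx v$ is satisfied by $\hypo$ if and only if $u$ and $v$ agree on admitting $xy$ and on admitting $yx$, that is, if and only if they lie in the same one of the three classes. For the forward direction, if $u$ had the form $x^ay^b$ or $y^bx^a$, then $v$ would be forced into the same singleton class, giving $v=u$ and contradicting non-triviality; by symmetry the same applies to $v$, so neither side can have that form. For the converse, if neither side has that form then both lie in the third class, both admit $xy$ and $yx$, the hypothesis of Theorem~\ref{theorem:hypo_identities} is met, and the identity holds in $\hypo$.

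I do not expect any serious obstacle: the substantive content is carried entirely by Theorem~\ref{theorem:hypo_identities}, and the corollary is essentially its specialization to a two-letter alphabet, where the possible subsequence patterns are pinned down by the two extreme arrangements $x^ay^b$ and $y^bx^a$. The only point requiring care is the bookkeeping in the degenerate cases (a missing variable, or an exponent equal to zero), which must be shown to collapse to trivial identities so that they do not interfere with the clean statement.
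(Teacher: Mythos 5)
Your proposal is correct and takes essentially the same route as the paper's proof: both reduce everything to Theorem~\ref{theorem:hypo_identities}, note that among words over $\{x,y\}$ in which both letters occur the only ones missing the subsequence $yx$ (resp.\ $xy$) are $x^a y^b$ (resp.\ $y^b x^a$), then get the forward direction by combining balancedness with non-triviality to force $u=v$, and the converse by observing that both sides must admit both subsequences. Your explicit three-class bookkeeping and handling of the degenerate one-variable cases is just a slightly more systematic write-up of the paper's argument.
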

	
	\begin{proof}
		Let us first suppose that $u \approx v$ be a non-trivial identity, over the two-letter alphabet $\{x,y\}$, satisfied by $\hypo$. By the previous theorem, we know that this identity must be balanced. Suppose, without loss of generality, that $u = x^a y^b$, for some $a,b \in \nset$. Then, since $u$ does not admit $yx$ as a subsequence, $v$ cannot as well. Since $u \approx v$ is a balanced identity, we conclude that $v = x^a y^b = u$, which contradicts our hypothesis that $u \approx v$ is non-trivial.
		
		Since this argument can be applied to all other possible cases, we conclude that neither side of the identity is of the form $x^a y^b$ or $y^b x^a$.
		
		Conversely, let $u \approx v$ be a non-trivial, balanced identity, over the two-letter alphabet $\{x,y\}$, such that neither side of the identity is of the form $x^a y^b$ or $y^b x^a$, for some $a,b \in \nset$. The, since $u \approx v$ is a non-trivial identity, both $x$ and $y$ must occur at least once in both $u$ and $v$. 
		
		Observe that the only words over $\{x,y\}$, where both $x$ and $y$ occur, that do not admit $yx$ as a subsequence, are words of the form $x^a y^b$. Similarly, the only words over $\{x,y\}$, where both $x$ and $y$ occur, that do not admit $xy$ as a subsequence, are words of the form $y^b x^a$. Thus, both $u$ and $v$ admit $xy$ and $yx$ as subsequences. Hence, by the previous Theorem, $u \approx v$ is satisfied by $\hypo$.
	\end{proof}
	
	The following corollary will be important in the next subsection:
	
	\begin{corollary}
    \label{corollary:hypo_shortest_identity_n_variables}
		The shortest non-trivial identity, with $n$ variables, satisfied by $\hypo$, is of length $n+2$.
	\end{corollary}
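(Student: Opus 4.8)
The plan is to establish the two bounds separately: exhibit a non-trivial $n$-variable identity of length exactly $n+2$ (the upper bound), and then show that no non-trivial $n$-variable identity can have length $n$ or $n+1$ (the lower bound). Throughout I would rely on the characterization in Theorem~\ref{theorem:hypo_identities}: a balanced identity holds in $\hypo$ precisely when, for every pair of variables $x,y$ occurring in it, the two sides admit exactly the same subsequences from $\{xy, yx\}$. Since a satisfied identity must be balanced, its two sides share content and hence support, so all $n$ variables occur on both sides (I would treat $n \geq 2$, as no non-trivial balanced identity uses a single variable).

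For the upper bound I would take the explicit identity
\[
z_1 z_2 \cdots z_{n-2}\, x y x y \approx z_1 z_2 \cdots z_{n-2}\, x y y x,
\]
which has exactly $n$ distinct variables and length $n+2$, and is non-trivial because $xyxy \neq xyyx$. It is balanced, and for every pair of variables the two sides carry the same length-$2$ subsequences: pairs involving some $z_i$ behave identically because the common prefix $z_1 \cdots z_{n-2}$ precedes the common two-letter material, while the pair $\{x,y\}$ works because both $xyxy$ and $xyyx$ admit $xy$ and $yx$. Theorem~\ref{theorem:hypo_identities} then yields that it is satisfied by $\hypo$.

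For the lower bound, suppose $u \approx v$ is a non-trivial $n$-variable identity satisfied by $\hypo$. Being balanced, $|u| = |v|$, and each variable occurs at least once, so $|u| \geq n$. If $|u| = n$, every variable occurs exactly once on each side, so $u$ and $v$ are two orderings of the same $n$ variables; as $u \neq v$ they differ by an inversion, i.e.\ some pair $x,y$ occurs in opposite relative orders, giving $xy$ as a subsequence on one side and only $yx$ on the other, contradicting Theorem~\ref{theorem:hypo_identities}. The remaining case $|u| = n+1$ is the crux: here exactly one variable $w$ occurs twice and the other $n-1$ variables occur once on each side, and I must show $u = v$, contradicting non-triviality.

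The key step is to reconstruct $u$ uniquely from its length-$2$ subsequence data, which by Theorem~\ref{theorem:hypo_identities} coincides with that of $v$. First, for any two once-occurring variables $a_i, a_j$ exactly one of $a_i a_j, a_j a_i$ is a subsequence, so their relative order is forced and is identical in $u$ and $v$. Second, for a once-occurring $a_i$, the subsequence $w a_i$ is present iff the leftmost $w$ precedes $a_i$, and $a_i w$ is present iff the rightmost $w$ follows $a_i$; the conjunction of these two facts places $a_i$ unambiguously into one of the three regions ``before both copies of $w$'', ``between the two copies of $w$'', or ``after both copies of $w$'', and this assignment is the same in $u$ and $v$. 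Consequently both words have the shape [block]\,$w$\,[block]\,$w$\,[block], where each block consists of the once-occurring variables assigned to that region, listed in the common relative order; this determines $u$ and $v$ identically, so $u = v$. I expect this reconstruction to be the main obstacle, specifically the verification that the length-$2$ data pins down the position of each single variable relative to the two copies of $w$; once the three-region decomposition is in hand, uniqueness is immediate, completing the proof.
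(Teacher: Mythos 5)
Your proposal is correct and takes essentially the same approach as the paper: an explicit length-$(n+2)$ witness identity (the paper uses $x a_1 \cdots a_{n-1} x x \approx x x a_1 \cdots a_{n-1} x$ rather than your $z_1 \cdots z_{n-2}\,xyxy \approx z_1 \cdots z_{n-2}\,xyyx$, but either works), followed by ruling out lengths $n$ and $n+1$ using the subsequence characterization of Theorem~\ref{theorem:hypo_identities}. Your three-region reconstruction in the length-$(n+1)$ case is the same uniqueness argument the paper performs by locating the two occurrences of the repeated variable relative to the once-occurring ones, just phrased so that the degenerate cases (empty blocks) are handled more explicitly.
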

	
	\begin{proof}
		It is immediate, by the previous theorem, that for variables $x, a_1 \dots a_{n-1}$, 
		\[
		x a_1 \dots a_{n-1} x x \approx x x a_1 \dots a_{n-1} x
		\]
		is an identity satisfied by $\hypo$. 
		
		On the other hand, suppose that there exists a non-trivial identity $u \approx v$, with $n$ variables, satisfied by $\hypo$. Let $\mathcal{X}$ be the set of variables which occur in $u$ and $v$. By Theorem~\ref{theorem:hypo_identities}, if $u$ admits a subsequence $xy$, for variables $x,y \in X$, then $v$ must also admit such a subsequence. Thus, we easily conclude that the identity cannot be of length $n$, otherwise it would be trivial. As such, there must be some variable $z$ which occurs twice in $u$ and $v$.
		
		If $u \approx v$ is of length $n+1$, then $z$ occurs twice and all other variables $a_1, \dots, a_{n-1}$ occur only once, in $u$ and $v$. Assume, without loss of generality, that $u$ admits the subsequence $a_1 \cdots a_{n-1}$. Again, by Theorem~\ref{theorem:hypo_identities}, $v$ must also admit this subsequence.
		
		Let $i$ be the greatest index such that $u$ admits $a_i z$ as a subsequence, but not $z a_i$, and let $j$ be the smallest index such that $u$ admits $z a_j$ as a subsequence, but not $a_j z$. Since $z$ occurs exactly twice in $u$, we can conclude that 
		\[
		u = a_1 \cdots a_i \, z \, a_{i+1} \cdots a_{j-1} \, z \, a_{j} \cdots a_{n-1}.
		\]
		
		Once again, by Theorem~\ref{theorem:hypo_identities}, $v$ must also satisfy these conditions. Hence, we can conclude that $u=v$.
		
		This contradicts the hypothesis that $u \approx v$ is a non-trivial identity satisfied by $\hypo$. Hence, there is no non-trivial identity, with $n$ variables, satisfied by $\hypo$, of length $n+1$.
	\end{proof}
	
	
	\subsection{The axiomatic rank of the variety generated by the hypoplactic monoid}
    \label{subsection:basis_hypo}
	
	Now, we prove that not only $\mathbf{V}_{\hypo}$ has finite axiomatic rank, it is also finitely based. We give a basis for $\mathbf{V}_{\hypo}$ with three identities, all of them over a four-letter alphabet, each of length $6$. This basis is minimal, in the sense that no identity in this basis is a consequence of the others, and also that each identity is of minimal length, for identities satisfied by $\hypo$ over a four-letter alphabet. Furthermore, we also prove that there exists no basis for $\mathbf{V}_{\hypo}$ with only identities over an alphabet with at most three variables, thus showing that the axiomatic rank of $\mathbf{V}_{\hypo}$ is $4$.
	
	\begin{theorem}
		$\mathbf{V}_{\hypo}$ admits a finite basis $\mathcal{B}_{\hypo}$, with the following identities:
		\begin{align}
            xyzxty \approx yxzxty; \tag{L} \label{idL}\\
            xzxytx \approx xzyxtx; \tag{M} \label{idM}\\
            xzytxy \approx xzytyx. \tag{R} \label{idR}
        \end{align}
	\end{theorem}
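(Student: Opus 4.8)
The plan is to prove that $\mathcal{B}_{\hypo} = \{\eqref{idL},\eqref{idM},\eqref{idR}\}$ generates exactly the equational theory of $\hypo$. One inclusion is immediate: by Example~\ref{example:hypo_basis_identities} (equivalently, by checking the conditions of Theorem~\ref{theorem:hypo_identities}), the three identities are satisfied by $\hypo$, so every consequence of $\mathcal{B}_{\hypo}$ is satisfied by $\hypo$ as well. The real content is the converse, namely that every identity $u \approx v$ satisfied by $\hypo$ is a consequence of $\mathcal{B}_{\hypo}$. By Theorem~\ref{theorem:hypo_identities} I may assume $u \approx v$ is balanced and that, for all variables $x,y$ occurring in both words, $u$ admits $xy$ as a subsequence if and only if $v$ does. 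It is convenient to repackage this data as: $c(u)=c(v)$, together with the strict partial order $\prec$ on the common support defined by ``$a \prec b$ iff every occurrence of $a$ precedes every occurrence of $b$'', which must coincide for $u$ and $v$ (the $\prec$-incomparable pairs being exactly those that interleave, i.e.\ admit both $ab$ and $ba$ as subsequences).

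The engine of the argument is a \emph{swap lemma}: if two words of equal content differ only by transposing two adjacent, distinct letters, and this transposition preserves all length-$2$ subsequences, then their equality is a one-step consequence of $\mathcal{B}_{\hypo}$. First I would record that setting the filler variables $z,t \mapsto \varepsilon$ in \eqref{idL}, \eqref{idM}, \eqref{idR} recovers $xyxy \approx yxxy$, $xxyx \approx xyxx$ and $xyxy \approx xyyx$, namely the shortest identities of Corollary~\ref{corollary:cm_identities}. To prove the lemma in general, I would analyse precisely when transposing an adjacent pair $ab \to ba$ (with $a \neq b$ letters) preserves subsequences: this happens exactly when each of the two affected subsequences $ab$ and $ba$ retains a witness, which forces one of four configurations—$a$ and $b$ both occur to the left of the pair, $a$ occurs on both sides, $b$ occurs on both sides, or $a$ and $b$ both occur to the right. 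I would match these to instances of \eqref{idR}, \eqref{idM}, \eqref{idM}, \eqref{idL} respectively, letting $z$ and $t$ absorb the intervening material, letting the prefix and suffix of the consequence relation carry the remainder of the word, and choosing the orientation and the renaming of $\{x,y\}$ onto $\{a,b\}$ that makes the pattern fit.

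With the swap lemma in hand, it remains to establish a connectivity statement: any two words with the same content and the same order $\prec$ are joined by a chain of subsequence-preserving adjacent transpositions. Concatenating the corresponding one-step consequences then yields $u \approx v$ as a consequence of $\mathcal{B}_{\hypo}$, completing the proof. I would prove connectivity by induction on the common length. Writing $v = c v'$, the leading letter $c$ is necessarily $\prec$-minimal; I would bring an occurrence of $c$ to the front of $u$ by subsequence-preserving swaps, strip the common leading $c$ (checking that $u' \approx v'$ inherits the hypotheses, the shared prefix being reinstated at the end through the prefix of the consequence relation), and then apply the induction hypothesis.

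The main obstacle is precisely this migration step. Moving a chosen $c$ leftward past its neighbours one transposition at a time need not keep every individual transposition subsequence-preserving, even though the global rearrangement is legitimate—as witnessed by the fact that $v$ itself already begins with $c$. The delicate point is therefore to argue that, because $c$ is $\prec$-minimal and the letters standing before its first occurrence are exactly those that interleave with it, some occurrence of $c$ can indeed be driven to the front without ever destroying a subsequence-witness, by a careful choice of which occurrence to move and in what order to perform the swaps. Everything else—the reduction via Theorem~\ref{theorem:hypo_identities}, the four-case bookkeeping of the swap lemma, and the outer induction—should be routine once this connectivity is secured.
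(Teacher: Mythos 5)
Your overall plan coincides with the paper's: reduce via Theorem~\ref{theorem:hypo_identities}, then connect $u$ to $v$ by adjacent transpositions, each licensed by a single identity of $\mathcal{B}_{\hypo}$. Your swap lemma---the characterization of subsequence-preserving adjacent transpositions by the four configurations, matched to (R), (M), (M), (L)---is exactly the paper's case analysis, and that part of your sketch is sound. However, the step you yourself flag as ``the main obstacle'', namely driving an occurrence of $c$ to the front of $u$ by swaps each of which is individually subsequence-preserving, is left unproved, and it is precisely where all of the work in the paper's proof lies. A proposal that identifies this as the delicate point and then defers it has a genuine gap: nothing in your text establishes that a suitable occurrence and ordering of swaps exist.

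The gap is fillable, and in fact no clever choice of occurrence or ordering is needed: ``leftmost occurrence, one step at a time'' already works, once the conditions for each swap are derived from the comparison with $v$ rather than from $u$ alone. The paper keeps the common prefix $w$ explicit instead of stripping it: write $u = w\,u_1\,a\,y\,u_2$ and $v = w\,y\,v_1\,a\,v_2$, where $y$ is the letter $v$ carries just after $w$, the displayed $y$ is the leftmost occurrence of $y$ in $u$ after $w$ (so $y$ does not occur in $u_1$), $a$ is its immediate left neighbour, and the displayed $a$ is the leftmost occurrence of $a$ in $v$ after $wy$ (so $a$ does not occur in $v_1$). Since $u$ admits $ay$ as a subsequence, so must $v$; as $a$ does not occur in $v_1$, this forces $a$ to occur in $w$ or $y$ to occur in $v_2$, and in the latter case counting occurrences of $y$ yields $y$ in $u_2$. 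Symmetrically, $v$ admits $ya$, hence so must $u$; as $y$ does not occur in $u_1$, this forces $y$ in $w$ or $a$ in $u_2$. The conjunction of these two disjunctions is exactly the disjunction of your four configurations, so the single swap $ay \to ya$ is an instance of (L), (M) or (R); the new word still forms an identity with $v$ satisfied by $\hypo$, so the argument repeats, each repetition shortens $u_1$, and when $u_1$ is exhausted the common prefix has strictly grown; an outer double induction (on the length of the identity, then on the length of the common prefix) concludes. In your stripped formulation the bookkeeping is even simpler: for any letter $a$ standing before the leftmost $c$ in $u$, the word $v$ admits $ca$ (it begins with $c$), hence so does $u$, so $a$ recurs to the right of that $c$; and $u$ admits $ac$, hence so does $v$, which forces $c$ to occur at least twice and therefore to recur to the right as well. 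Thus every migration swap falls into your ``both occur to the right'' configuration and is an (L)-swap, which is the concrete answer to the question your proposal leaves open.
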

	
	\begin{proof}
		Let $\mathcal{B}_{\hypo}$ be the set of identities which contains the previously mentioned identities. Notice that these identities are the ones given in Example \ref{example:hypo_basis_identities}.
		
		The following proof will be done by induction, in the following sense: First, we order identities by their length. Then, within a set of identities of the same length, we order them by the length of the common prefix of both sides of the identity. The induction will be first on the length of the identities, and then on the length of the suffix, that is, the length of the identity minus the length of the common prefix. 
		
		The base cases for the induction on the length of the identities are those given in Corollary \ref{corollary:cm_identities}, as they are, up to equivalence, the shortest non-trivial identities satisfied by $\hypo$. Notice that these identities are consequences of $\mathcal{B}_{\hypo}$: to show this, we just need to replace the variables $z$ and $t$ with the empty word, and, if necessary, rename the remaining variables.
		
		The base cases for the induction on the length of the suffix, for identities of length $n$ (with $n \geq 4$), are those identities of the form 
		\[
		w x y \approx w y x,
		\]
		where $w$ is a word of length $n-2$ and $x,y$ are variables. Observe that, since any identity $u \approx v$ satisfied by $\hypo$ is a balanced identity, there are no non-trivial identities, of length $n$, with a common prefix of length greater than $n-2$, satisfied by $\hypo$. Furthermore, since $wxy$ admits a subsequence $xy$ and $wyx$ admits a subsequence $yx$, then $x$ and $y$ must both occur in $w$. Thus, $w$ is of the form
		\[
		w_1 x w_2 y w_3 \quad \text{or} \quad w_1 y w_2 x w_3,
		\]
		for some words $w_1, w_2, w_3$. Therefore, by replacing $z$ with $w_2$, and $t$ by $w_3$, and, if necessary, renaming $x$ and $y$, we can immediately deduce this identity from the identity \eqref{idR} of $\mathcal{B}_{\hypo}$.
		
		The idea of the proof of the induction step is that, for any identity $u \approx v$, we can apply identities of $\mathcal{B}_{\hypo}$, finitely many times, to deduce a new identity $u^* \approx v$ from $u \approx v$, such that $u^*$ is ``closer'' to $v$ than $u$, in the sense that $u^*$ and $v$ have a common prefix which is strictly longer than the common prefix of $u$ and $v$. More formally, by ``deducing'' $u^* \approx v$ from $u \approx v$, using identities of $\mathcal{B}_{\hypo}$, we mean showing that $u \approx u^*$ is a consequence of $\mathcal{B}_{\hypo}$.
		
		The technical part of the proof allows us to show that there is always a way to shuffle some variables of $u$ in such a way that we obtain $u^*$. We show that these variables must occur several times in $u$, thus allowing us to apply the identities of $\mathcal{B}_{\hypo}$ to shuffle $u$ and obtain $u^*$.
		
		By the induction hypothesis, we know that $u^* \approx v$ is a consequence of $\mathcal{B}_{\hypo}$. Thus, we conclude that $u \approx v$ is also a consequence of $\mathcal{B}_{\hypo}$. 
		
		Let $u \approx v$ be a non-trivial identity satisfied by $\hypo$. Let $\mathcal{X}$ be the set of variables which occur in $u$ and $v$. Since $u \approx v$ is a non-trivial identity, we must have $u = w x u'$ and $v = w y v'$, for some words $w,u',v' \in \mathcal{X}^*$ and variables $x,y$ such that $x \neq y$. Notice that $u'$ and $v'$ cannot be the empty word, otherwise, we would have $u= wx$ and $v = wy$, which contradicts the fact that $c(u) = c(v)$.
		
		On the other hand, since $c(xu') = c(yv')$, we have that $y$ occurs in $u'$. Thus, to distinguish the leftmost $y$ in $u'$, we have that
		\[
		x u' = u_1 a y u_2,
		\]
		for some variable $a$ and words $u_1$ and $u_2$, such that $y$ does not occur in $u_1$ and $a \neq y$. Once again, since $c(xu') = c(yv')$, we have that $a$ occurs in $v'$. Thus, to distinguish the leftmost $a$ in $v'$, we have that 
		\[
		v' =v_1 a v_2,
		\]
		for some words $v_1$ and $v_2$, such that $a$ does not occur in $v_1$. To sum up, we have that 
		\[
		u = w u_1 a y u_2 \quad \text{and} \quad v = w y v_1 a v_2,
		\]
		where $y$ does not occur in $u_1$ and $a$ does not occur in $v_1$.
		
		Notice that $u$ admits $ay$ as a subsequence, hence, $v$ must also do so. Thus, either $a$ occurs in $w$, or $y$ occurs in $v_2$, since $a$ does not occur in $v_1$. But if $y$ occurs in $v_2$, then it must also occur in $u_2$, since $c(xu') = c(yv')$ and $y$ does not occur in $u_1$.
		
		On the other hand, notice that $v$ admits $ya$ as a subsequence, hence, $u$ must also do so. Thus, either $y$ occurs in $w$, or $a$ occurs in $u_2$, since $y$ does not occur in $u_1$.
		
		As such, we have four possible cases to look at: 
		
		\textbf{Case 1.} Both variables $y$ and $a$ occur in $w$. Then, we can deduce the word $w u_1 y a u_2$ from $u$, by applying the identity \eqref{idR}, renaming $x$ to $a$.
		
		\textbf{Case 2.} Both variables $y$ and $a$ occur in $u_2$. Then, we can deduce the word $w u_1 y a u_2$ from $u$, by applying the identity \eqref{idL}, renaming $x$ to $a$.
		
		\textbf{Case 3.} Variable $y$ occurs in both $w$ and $u_2$. Then, we can deduce the word $w u_1 y a u_2$ from $u$, by applying the identity \eqref{idM}, renaming $x$ to $y$ and $y$ to $a$.
		
		\textbf{Case 4.} Variable $a$ occurs in both $w$ and $u_2$. Then, we can deduce the word $w u_1 y a u_2$ from $u$, by applying the identity \eqref{idM}, renaming $x$ to $a$.
		
		Observe that we can repeatedly apply this reasoning until we obtain a word of the form
		\[
		u^* = w y u'',
		\]
		for some word $u''$, since the only restriction imposed on the variable $a$ was that $a \neq y$. Thus, we have proven that, for any non-trivial identity $u \approx v$ satisfied by $\hypo$, we can obtain a new word $u^*$ from $u$ such that the common prefix of $u^*$ and $v$ is strictly longer than the common prefix of $u$ and $v$, by applying identities of $\mathcal{B}_{\hypo}$ finitely many times. 
		
		By the induction method, we conclude that $u \approx v$ is a consequence of $\mathcal{B}_{\hypo}$, thus proving that $\mathcal{B}_{\hypo}$ is a basis for $\mathbf{V}_{\hypo}$.
	\end{proof}
	
	An immediate consequence of having a finite basis is the following:
	
	\begin{corollary}
		$\mathbf{V}_{\hypo}$ has finite axiomatic rank.
	\end{corollary}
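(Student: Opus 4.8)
The plan is to derive the statement directly from the definition of axiomatic rank together with the basis exhibited in the preceding theorem, so essentially no new work is required. Recall that $r_a(\mathbf{V})$ is defined as the least natural number such that $\mathbf{V}$ admits a basis in which every identity has rank bounded by that number; to prove finiteness it therefore suffices to produce a single basis whose identities have uniformly bounded rank. First I would invoke the preceding theorem, which establishes that $\mathcal{B}_{\hypo} = \{\eqref{idL}, \eqref{idM}, \eqref{idR}\}$ is a basis for $\mathbf{V}_{\hypo}$.

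Next I would simply observe that each of the three identities in $\mathcal{B}_{\hypo}$ is written over the four-letter alphabet $\{x,y,z,t\}$, so each has rank at most $4$, and hence the maximum rank over the (finitely many) identities in $\mathcal{B}_{\hypo}$ is at most $4$. Consequently $\mathcal{B}_{\hypo}$ witnesses that $\mathbf{V}_{\hypo}$ admits a basis all of whose identities have rank not exceeding $4$, so $r_a(\mathbf{V}_{\hypo}) \le 4$. In particular, the axiomatic rank is finite, as claimed. This is precisely the remark made in the preliminaries, that any finitely based variety has finite axiomatic rank: a finite set of identities attains a finite maximum rank among its members.

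There is no genuine obstacle here; the corollary is a formal consequence of the definitions and the established basis. The only point worth flagging is that this argument yields merely the upper bound $r_a(\mathbf{V}_{\hypo}) \le 4$ and hence finiteness, not the exact value of the axiomatic rank. Pinning down the precise value $r_a(\mathbf{V}_{\hypo}) = 4$ is a separate and strictly harder matter, as it additionally requires ruling out any basis consisting solely of identities over alphabets with at most three variables; that finer analysis is carried out elsewhere in this subsection and is not needed for the present corollary.
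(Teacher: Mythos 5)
Your proposal is correct and follows the same route as the paper: the preceding theorem supplies the finite basis $\mathcal{B}_{\hypo}$, and finiteness of axiomatic rank is then immediate from the remark in the preliminaries that any finitely based variety has finite axiomatic rank (your explicit bound $r_a(\mathbf{V}_{\hypo}) \le 4$ just spells out why that remark holds). You are also right that the exact value $4$ requires the separate non-consequence arguments for \eqref{idL} and \eqref{idR}, which the paper carries out afterwards and which are not needed for this corollary.
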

	
	Now, in order to determine the axiomatic rank of $\mathbf{V}_{\hypo}$, we first check if all identities in $\mathcal{B}_{\hypo}$ are necessary in order to obtain a basis for $\mathbf{V}_{\hypo}$. It is easy to see that right-zero semigroups satisfy identities \eqref{idL} and \eqref{idM}, but not the identity \eqref{idR}, and left-zero semigroups satisfy \eqref{idR} and \eqref{idM}, but not \eqref{idL}. Thus, \eqref{idL} and \eqref{idR} are not consequences of the other identities in $\mathcal{B}_{\hypo}$. 
	
	On the other hand, let $S = \{1,a,b,c,0\}$ be a semigroup with multiplication table:
    \begin{center}
		\begin{tabular}{c|ccccc|}
              & 1 & a & b & c & 0 \\
            \hline
            1 & 1 & a & b & c & 0 \\
            a & a & 1 & b & c & 0 \\
            b & b & c & 0 & 0 & 0 \\
            c & c & b & 0 & 0 & 0 \\
            0 & 0 & 0 & 0 & 0 & 0 \\
            \hline
		\end{tabular}
	\end{center}
		
	This example was produced using Mace4~\cite{prover9-mace4}. It consists of a null semigroup $K = \{b,c,0\}$ with a cyclic group $C_2 = \{1,a\}$ acting on it identically from the left and by permuting $b$ and $c$ on the right. Notice that $a^2 = 1$, $ba = c = ac$ and $ca = b = ab$. 
	
	$S$ satisfies \eqref{idL} and \eqref{idR}: If we substitute all variables with elements of $C_2$, we get equality by commutativity. If, after substituting, we have more than one element of $K$ on each side, both sides equal $0$. So we are left with the case of substituting a variable that appears once (either $z$ or $t$) with an element of $K$ and everything else with elements of $C_2$. But since $z$ and $t$ are in the same position on both sides of the identities, and whichever elements of $C_2$ are substituted for $x$ and $y$ commute, the action of the elements of $C_2$ on the element of $K$ gives the same result. However, $S$ does not satisfy \eqref{idM}: Taking the evaluation $\phi$ such that $\phi(y)=c$ and $\phi(x) = \phi(z) = \phi(t) = a$, we get
	\[
        \phi(xzxytx) = aaacaa = c \neq b = aacaaa = \phi(xzyxtx).
	\]	
	
	Since no identity of $\mathcal{B}_{\hypo}$ is a consequence of the other identities also in $\mathcal{B}_{\hypo}$, we conclude that $\mathcal{B}_{\hypo}$ is minimal, in the sense that it does not contain any proper subset which is also a basis for $\mathbf{V}_{\hypo}$.
	
	We now show that the identities \eqref{idL} and \eqref{idR} are required to be in any basis for $\mathbf{V}_{\hypo}$ which contains only identities over an alphabet with four variables:
		
	\begin{proposition}
	\label{proposition:idL_not_consequence}
        The identity \eqref{idL} is not a consequence of the set of non-trivial identities, satisfied by $\hypo$, over an alphabet with four variables, excluding \eqref{idL} itself and equivalent identities.
	\end{proposition}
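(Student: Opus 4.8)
The plan is to argue by contradiction, analysing closely the hypothetical deduction witnessing \eqref{idL} as a consequence. Suppose \eqref{idL} were a consequence of the family $\Sigma$ of non-trivial identities satisfied by $\hypo$ over $\{x,y,z,t\}$, excluding \eqref{idL} and its equivalents. Then there is a sequence of words $xyzxty = w_1, w_2, \dots, w_k = yxzxty$ in which each $w_{i+1}$ is obtained from $w_i$ by rewriting a factor $\sigma_i(p_i)$ into $\sigma_i(q_i)$, with $p_i \approx q_i$ (or its reverse) in $\Sigma$. The decisive first observation is that every identity in $\Sigma$ is satisfied by $\hypo$, so each step replaces a factor by a $\equiv_{\hypo}$-equivalent one; since $\equiv_{\hypo}$ is a congruence, $w_i \equiv_{\hypo} w_{i+1}$ for every $i$, and hence $xyzxty \approx w_i$ is itself an identity satisfied by $\hypo$.

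The first main step is therefore to list all words standing in this relation to $xyzxty$. By Theorem~\ref{theorem:hypo_identities}, a word $w$ satisfies $xyzxty \approx w$ in $\hypo$ exactly when it is balanced with $xyzxty$ (so it has content $x^2 y^2 z t$) and shares the same length-two subsequences. I would translate these subsequence conditions into positional constraints: $z$ lies strictly between the two occurrences of $x$ and strictly between the two occurrences of $y$; $t$ lies after both occurrences of $x$ and strictly between the two occurrences of $y$; $z$ precedes $t$; and the $x$'s and $y$'s are interleaved. A short enumeration then forces $t$ into the fifth position and shows that the only two admissible words are $xyzxty$ and $yxzxty$. Consequently the entire deduction lives inside the two-element set $\{xyzxty, yxzxty\}$, so some single step must carry $xyzxty$ directly to $yxzxty$.

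The second main step is to show that no admissible single step can do this. Since $xyzxty$ and $yxzxty$ agree everywhere except in their first two letters, in a step $w_i = r\,\sigma(p)\,s$, $w_{i+1} = r\,\sigma(q)\,s$ the common prefix $r$ must be empty and the common suffix $s$ must be a suffix of $zxty$. This leaves the finitely many cases $|s| \in \{4,3,2,1,0\}$, in which $\sigma(p)$ and $\sigma(q)$ are the corresponding prefixes of $xyzxty$ and $yxzxty$. When $|s| \geq 1$, the pair $(\sigma(p), \sigma(q))$ is one of $(xy,yx)$, $(xyz,yxz)$, $(xyzx,yxzx)$, $(xyzxt,yxzxt)$; but $\sigma(p)\approx\sigma(q)$ is a substitution instance of $p \approx q$ and so must again be satisfied by $\hypo$, whereas in each of these pairs the prefix coming from $xyzxty$ admits $xy$ as a subsequence while the corresponding prefix of $yxzxty$ does not, so by Theorem~\ref{theorem:hypo_identities} they are not $\equiv_{\hypo}$-equal, a contradiction. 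Hence only the case $r = s = \varepsilon$, with $\sigma(p) = xyzxty$ and $\sigma(q) = yxzxty$, can remain.

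The crux, and the step I expect to be the main obstacle, is to rule out this last case, that is, to show $xyzxty \approx yxzxty$ is not a substitution instance of any $p \approx q \in \Sigma$. I would first reduce to the case where $\sigma$ erases no variable of $p$: deleting erased variables yields a shorter non-trivial identity still satisfied by $\hypo$ and still mapping onto \eqref{idL}, and the argument below shows such an identity must already use four variables, so nothing could have been erased. For the non-erasing case, let $m$ be the number of variables of $p$; a content count shows exactly one variable carries the single $z$ and one the single $t$, each occurring once. Combining $|\sigma(p)| = 6$ with the length bound $m+2$ for a non-trivial $m$-variable identity from Corollary~\ref{corollary:hypo_shortest_identity_n_variables} forces $m \leq 4$. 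A case check on $m \in \{2,3\}$ shows that the required length-$\geq 2$ block would have to be a contiguous factor of both $xyzxty$ and $yxzxty$, leaving too many distinct letters for the remaining single-letter variables; and $m = 4$ forces every block to have length one and $\sigma$ to be a bijective renaming, so $p \approx q$ is \eqref{idL} up to renaming. This contradicts $p \approx q \in \Sigma$. The delicate point throughout is the bookkeeping in this final case, where the rigidity of the two-element hypoplactic class must be leveraged to forbid \eqref{idL} from arising as a genuine specialization of a different four-variable identity.
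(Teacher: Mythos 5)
Your argument is correct in substance, but it takes a genuinely different route from the paper's. The paper never tracks the whole derivation: it isolates the \emph{first} step at which the word actually changes, writes $xyzxty = w_1\sigma(u)w_2$ with $\sigma(u)\neq\sigma(v)$ for some identity $u \approx v$ in the excluded set, and then uses occurrence counting (each variable of $u$ occurs at most twice, and only two can occur twice, since $xyzxty$ has content $x^2y^2zt$) to force $w_1 = w_2 = \varepsilon$, before ruling out the two-, three- and four-variable cases by counting arguments of the same kind as yours. You instead prove a global confinement statement: every intermediate word $w_i$ satisfies $xyzxty \approx w_i$ in $\hypo$, and by Theorem~\ref{theorem:hypo_identities} the only such words are $xyzxty$ and $yxzxty$ (your positional analysis here is right), so a single step must carry $xyzxty$ directly to $yxzxty$. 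Emptiness of the context then comes for free from comparing the two explicit words, and knowing both images $\sigma(p)=xyzxty$ and $\sigma(q)=yxzxty$ makes your four-variable case cleaner than the paper's: $\sigma$ is a bijective renaming, so $q=\sigma^{-1}(yxzxty)$ and $p\approx q$ is \eqref{idL} up to renaming, whereas the paper must tacitly invoke the characterization theorem to pin down $v$ at the corresponding point. The trade-off is that your route needs the enumeration step, which the paper avoids, and your $m\in\{2,3\}$ case checks are compressed; they do close (for $m=3$, the length-two block must be a common factor of both words, hence one of $zx$, $xt$, $ty$, and the leftover content then has three distinct letters to be covered by two single-letter variables; for $m=2$ the length bound $|p|\geq 4$ forces some variable to occur at least three times, which no letter of $xyzxty$ can support), but they should be written out.

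One point needs repair, though it is not fatal. You justify the confinement invariant by saying that each step replaces a factor by a $\equiv_{\hypo}$-equivalent one and that therefore $xyzxty \approx w_i$ is an identity satisfied by $\hypo$. That inference is invalid as stated: $\equiv_{\hypo}$-equivalence of two words over $\{x,y,z,t\}$ (reading the variables as letters) is strictly weaker than the corresponding identity holding in $\hypo$. For instance, reading $x<y<z$ as $1<2<3$, the plactic relation gives $xzy \equiv_{\hypo} zxy$, yet $xzy \approx zxy$ is \emph{not} an identity of $\hypo$, since $xzy$ admits $xz$ as a subsequence and $zxy$ does not. The correct justification is the one already available in Section~\ref{subsection:varieties_identities_and_bases}: each step applies, under a substitution and inside a context, an identity satisfied by $\hypo$, and the set of identities satisfied by a monoid is an equational theory, closed under substitution, context and transitivity; hence $w_i \approx w_{i+1}$, and by transitivity $xyzxty \approx w_i$, is satisfied by $\hypo$. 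With that substitution your proof goes through.
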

	
	\begin{proof}	
        Let $X := \{ x,y,z,t \}$ and let $\mathcal{S}$ be the set of all non-trivial identities, satisfied by $\hypo$, over an alphabet with four variables, excluding \eqref{idL} and equivalent identities. Suppose, in order to obtain a contradiction, that \eqref{idL} is a consequence of $\mathcal{S}$. As such, there must exist a non-trivial identity $u \approx v$ in $\mathcal{S}$, and a substitution $\sigma$, such that
        \[
            xyzxty = w_1 \sigma(u) w_2,
		\]
		where $w_1, w_2$ are words over $X$, and $\sigma(u) \neq \sigma(v)$. Notice that $u \approx v$ must be balanced, and that there must be at least two variables occurring in $u$ and $v$, otherwise, $u \approx v$ would be a trivial identity. 
		
		Observe that if the substitution $\sigma$ maps some variables occurring in $u$ and $v$ to the empty word, then $\sigma(u) = \sigma(u')$ and $\sigma(v) = \sigma(v')$, where $u'$ and $v'$ are words obtained by eliminating every occurrence of such variables in $u$ and $v$, respectively. Hence, we have that
        \[
            xyzxty = w_1 \sigma(u') w_2.
		\]
		Notice that $u' \approx v'$ is an identity satisfied by $\hypo$, which cannot be trivial, otherwise we would have $\sigma(u) = \sigma(u') = \sigma(v') = \sigma(v)$. Thus, $u' \approx v'$ is also in $\mathcal{S}$. On the other hand, notice that $\sigma$ does not map any variable occurring in $u'$ and $v'$ to the empty word. As such, any case where $\sigma$ maps any variable to the empty word guarantees the existence of another case where it does not map any variable to the empty word.
		
        Therefore, we can assume, without loss of generality, that $\sigma$ does not map any variable to the empty word. Due to this, and since \eqref{idL} is an identity where $x$ and $y$ occur two times, and $t$ and $z$ each occur one time, we have that each variable occurring in $u \approx v$ can occur at most two times, and only two variables can occur more than one time. Furthermore, by Corollary \ref{corollary:hypo_shortest_identity_n_variables}, which gives us a lower bound for the length of the identities, we have that $u \approx v$ is of length at least $4$. Thus, up to renaming of variables, $x$ and $y$ occur exactly twice in $u \approx v$, and $t$ and $z$ can occur at most one time.
                
        Suppose now, in order to obtain a contradiction, that $w_1 \neq \varepsilon$. Then, since $u \approx v$ is of length at least $4$, we must have $w_1$ of length at most $2$, that is, $w_1$ is either $x$ or $xy$. Therefore, $x$ can occur only once in $\sigma(u)$. But $x$ and $y$ occur twice in $u$, and $\sigma$ does not map any variable to the empty word, hence, there must be at least two variables which occur twice in $\sigma(u)$. However, only $x$ and $y$ occur twice in $xyzxty$. We have reached a contradiction, hence, $w_1 = \varepsilon$. Using a similar argument, we can also conclude that $w_2 = \varepsilon$. Therefore, we have that
        \[
            xyzxty = \sigma(u).
		\]

        As such, we can immediately conclude that only up to three variables occur in $u \approx v$: If $u \approx v$ were to be a four-variable identity, then it would be of length $6$, and $\sigma$ would be simply renaming the variables, thus implying that $u \approx v$ was equivalent to \eqref{idL}, which contradicts our hypothesis.    
                
        Suppose that $u \approx v$ is a two-variable identity. Hence, it is of length $4$ and $x$ and $y$ both occur twice in it. Since $xyzxty = \sigma(u)$, then either $\sigma(x)$ or $\sigma(y)$ must be a single variable, and the other must be of length $2$. Since no variable occurs more than twice in $\sigma(u)$, this implies that three variables occur twice in $\sigma(u)$. But $x$ and $y$ are the only variables which occur twice in $xyzxty$. We have reached a contradiction, hence, $u \approx v$ is not a two-variable identity.
        
        Then, $u \approx v$ must be a three-variable identity. Hence, it is of length $5$, with $x$ and $y$ occurring twice and $z$ occurring once in it. Notice that $\sigma(x)$ and $\sigma(y)$ must be single variables, otherwise, the length of $\sigma(u)$ would be greater than $6$. These variables cannot be $z$ or $t$, since they occur only once in $xyzxty$. Therefore, $\sigma(z)$ must be a factor of $xyzxty$ of length $2$. But neither $x$ nor $y$ can occur in $\sigma(z)$, hence, this factor cannot exist, and subsequently, $\sigma$ cannot exist. 
                
        As such, we can conclude that \eqref{idL} is not a consequence of the set of non-trivial identities, satisfied by $\hypo$, over an alphabet with four variables, excluding \eqref{idL} itself and equivalent identities.
	\end{proof}
	
	Parallel reasoning shows the analogous result for \eqref{idR}:
	
	\begin{proposition}
	\label{proposition:idR_not_consequence}
        The identity \eqref{idR} is not a consequence of the set of non-trivial identities, satisfied by $\hypo$, over an alphabet with four variables, excluding \eqref{idR} itself and equivalent identities.
	\end{proposition}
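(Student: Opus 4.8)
The plan is to mirror, step for step, the argument used in Proposition~\ref{proposition:idL_not_consequence}, since the identity \eqref{idR}, namely $xzytxy \approx xzytyx$, shares with \eqref{idL} every combinatorial feature that drove that proof: it has length $6$, exactly two of its variables ($x$ and $y$) occur twice while the other two ($z$ and $t$) occur once, every letter has multiplicity at most $2$, both endpoints of each side are occurrences of a twice-occurring variable, and --- crucially --- the two once-occurring variables are not adjacent in the word $xzytxy$. I would begin by setting $X := \{x,y,z,t\}$, letting $\mathcal{S}$ be the set of all non-trivial identities over $X$ satisfied by $\hypo$ apart from \eqref{idR} and its equivalents, and assuming for contradiction that \eqref{idR} is a consequence of $\mathcal{S}$. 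The first step of any deduction chain then yields an identity $u \approx v \in \mathcal{S}$ (necessarily balanced, with at least two occurring variables) and a substitution $\sigma$ with $xzytxy = w_1\,\sigma(u)\,w_2$ and $\sigma(u) \neq \sigma(v)$.

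Next I would reduce to the case where $\sigma$ erases no variable, exactly as before: deleting the erased variables from $u$ and $v$ produces a shorter identity still in $\mathcal{S}$ realizing the same factorization. With $\sigma$ non-erasing, the multiplicity profile of $xzytxy$ forces, up to renaming, that $u$ has exactly two variables occurring twice and at most two occurring once, and Corollary~\ref{corollary:hypo_shortest_identity_n_variables} then gives $|u| \geq 4$. I would then show $w_1 = w_2 = \varepsilon$: if $w_1 \neq \varepsilon$ it has length at most $2$ and must be a prefix of $xzytxy$ (so $x$ or $xz$), which in either case consumes the leading $x$ and leaves only one $x$ inside $\sigma(u)$; but two distinct twice-occurring variables of $u$ must contribute two distinct letters of multiplicity $\geq 2$ to $\sigma(u)$ (here the bound ``maximum multiplicity $2$'' is what prevents the two images from sharing a letter), which is impossible once only $y$ can occur twice. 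The symmetric argument on the trailing $y$ gives $w_2 = \varepsilon$.

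Having reduced to $xzytxy = \sigma(u)$, I would split on the number of variables in $u \approx v$. The four-variable case forces $\sigma$ to be a mere renaming, making $u \approx v$ equivalent to \eqref{idR}, contrary to assumption; the two-variable case is excluded by the same multiplicity count as for \eqref{idL}. The main obstacle, and the only point where the specific shape of \eqref{idR} must be checked afresh, is the three-variable case: here $u$ has length $5$ with $x,y$ occurring twice and a single variable, say $z$, occurring once, so $\sigma(x)$ and $\sigma(y)$ are single letters and $\sigma(z)$ must be a length-$2$ contiguous factor of $xzytxy$ containing neither $x$ nor $y$. Since the only multiplicity-$1$ letters of $xzytxy$ are $z$ and $t$, and these sit non-adjacently --- the length-$2$ factors being $xz,\,zy,\,yt,\,tx,\,xy$, each involving a twice-occurring variable --- no such factor exists, so $\sigma$ cannot exist. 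This final contradiction completes the argument, and I expect this verification of the non-adjacency of $z$ and $t$ in $xzytxy$ to be the single genuinely identity-specific point, everything else transferring verbatim from the proof of Proposition~\ref{proposition:idL_not_consequence}.
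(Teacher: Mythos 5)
Your proposal is correct and is exactly what the paper intends: the paper's own ``proof'' consists of the single remark that parallel reasoning to Proposition~\ref{proposition:idL_not_consequence} applies, and your argument carries out that parallel reasoning faithfully, including the one genuinely identity-specific check (that the two multiplicity-one letters $z,t$ are non-adjacent in $xzytxy$, so no length-$2$ factor avoids $x$ and $y$ in the three-variable case).
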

    
	Therefore, we can conclude that $\mathbf{V}_{\hypo}$ does not admit any basis with only identities over an alphabet with two or three variables. In other words, we have that:
	
	\begin{corollary}
        The axiomatic rank of $\mathbf{V}_{\hypo}$ is $4$.
	\end{corollary}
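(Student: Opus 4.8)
The plan is to combine the finite basis $\mathcal{B}_{\hypo}$, which establishes an upper bound, with Proposition~\ref{proposition:idL_not_consequence}, which will supply the matching lower bound. Since every identity in $\mathcal{B}_{\hypo}$---namely \eqref{idL}, \eqref{idM} and \eqref{idR}---has rank exactly $4$, the variety $\mathbf{V}_{\hypo}$ admits a basis all of whose identities have rank at most $4$, so immediately $r_a(\mathbf{V}_{\hypo}) \leq 4$. It then remains to show that no basis consisting solely of identities of rank at most $3$ can exist, which would force $r_a(\mathbf{V}_{\hypo}) \geq 4$.

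First I would argue by contradiction: suppose $\mathcal{B}$ were a basis for $\mathbf{V}_{\hypo}$ in which every identity has rank at most $3$. Each such identity is satisfied by $\hypo$, and since the rank of an identity is invariant under renaming of variables, I may assume without loss of generality that every identity in $\mathcal{B}$ uses only variables drawn from $\{x,y,z\} \subseteq \{x,y,z,t\}$; passing from $\mathcal{B}$ to this renamed set does not change the set of consequences, because each renaming and its inverse are realised by substitutions. In particular, $\mathcal{B}$ is then a set of non-trivial identities satisfied by $\hypo$ over the four-letter alphabet $\{x,y,z,t\}$, and since \eqref{idL} has rank $4$, neither \eqref{idL} nor any identity equivalent to it lies in $\mathcal{B}$. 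Hence $\mathcal{B}$ is contained in the set of all non-trivial identities satisfied by $\hypo$ over the four-variable alphabet that excludes \eqref{idL} and its equivalents, which is precisely the set appearing in Proposition~\ref{proposition:idL_not_consequence}.

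Now, because \eqref{idL} is satisfied by $\hypo$ (Example~\ref{example:hypo_basis_identities}) and $\mathcal{B}$ is a basis for $\mathbf{V}_{\hypo}$, the identity \eqref{idL} must be a consequence of $\mathcal{B}$; by monotonicity of the consequence relation together with the inclusion just established, \eqref{idL} would then also be a consequence of the larger set of Proposition~\ref{proposition:idL_not_consequence}. This directly contradicts that proposition. (Proposition~\ref{proposition:idR_not_consequence} furnishes an alternative contradiction via \eqref{idR}; either one alone suffices.) Therefore no basis of rank at most $3$ exists, giving $r_a(\mathbf{V}_{\hypo}) \geq 4$, and combining this with the upper bound above I conclude $r_a(\mathbf{V}_{\hypo}) = 4$.

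I would expect the only delicate point to be the reduction step in the second paragraph: one must verify that rewriting the identities of a putative low-rank basis into variables taken from $\{x,y,z,t\}$ genuinely places them inside the set of Proposition~\ref{proposition:idL_not_consequence} without altering the consequence closure. The entire force of the argument rests on the fact that that proposition forbids deducibility of \eqref{idL} not merely from the other identities of $\mathcal{B}_{\hypo}$, but from \emph{all} four-variable identities of $\hypo$ together (and hence from all identities of rank at most $3$); once the low-rank basis is seen to be a subset of that family, the contradiction is immediate.
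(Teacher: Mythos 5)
Your proof is correct and takes essentially the same approach as the paper: the rank-$4$ basis $\mathcal{B}_{\hypo}$ supplies the upper bound, and Proposition~\ref{proposition:idL_not_consequence} supplies the lower bound, since any putative basis of rank at most $3$ would, after renaming, be contained in the set of four-variable identities of $\hypo$ excluding \eqref{idL} and its equivalents. The paper leaves the renaming/monotonicity details implicit; you have simply made them explicit.
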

	
	Another consequence of the previous proposition is the following:
	
	\begin{corollary}
		Any basis for $\mathbf{V}_{\hypo}$ with only identities over an alphabet with four variables must contain the identities \eqref{idL} and \eqref{idR}, or equivalent identities.
	\end{corollary}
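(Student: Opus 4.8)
The plan is to deduce the corollary formally from Propositions~\ref{proposition:idL_not_consequence} and~\ref{proposition:idR_not_consequence}, using nothing more than the monotonicity of the consequence relation. Let $\mathcal{B}$ be any basis for $\mathbf{V}_{\hypo}$ all of whose members are identities over the four-variable alphabet $\{x,y,z,t\}$. I would argue by contradiction for \eqref{idL}: assume that $\mathcal{B}$ contains neither \eqref{idL} nor any identity equivalent to it (that is, obtainable from \eqref{idL} by renaming variables or swapping the two sides). The goal is to contradict Proposition~\ref{proposition:idL_not_consequence}.

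Since $\mathcal{B}$ is a basis, each of its identities lies in the equational theory of $\mathbf{V}_{\hypo}$, hence is a non-trivial identity satisfied by $\hypo$ over four variables; any trivial identity in $\mathcal{B}$ can be discarded, as trivial identities are consequences of every set and so contribute nothing to the consequences of $\mathcal{B}$. Under the contradiction hypothesis, $\mathcal{B}$ is then contained in the set $\mathcal{S}$ appearing in Proposition~\ref{proposition:idL_not_consequence}, namely the set of all non-trivial four-variable identities satisfied by $\hypo$ with \eqref{idL} and its equivalents removed. Because the consequence relation is monotone --- every consequence of a set of identities is also a consequence of any larger set --- it follows that every consequence of $\mathcal{B}$ is a consequence of $\mathcal{S}$.

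Now the identity \eqref{idL} is satisfied by $\hypo$ (it appears in the basis $\mathcal{B}_{\hypo}$ and in Example~\ref{example:hypo_basis_identities}), so it belongs to the equational theory of $\mathbf{V}_{\hypo}$ and is therefore a consequence of the basis $\mathcal{B}$. Combining this with the previous step, \eqref{idL} would be a consequence of $\mathcal{S}$, directly contradicting Proposition~\ref{proposition:idL_not_consequence}. Hence $\mathcal{B}$ must contain \eqref{idL} or an equivalent identity. Replacing \eqref{idL} by \eqref{idR} and Proposition~\ref{proposition:idL_not_consequence} by Proposition~\ref{proposition:idR_not_consequence} throughout yields the same conclusion for \eqref{idR}, which completes the argument.

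There is no genuine combinatorial difficulty here: all the substantive work has already been carried out in establishing that \eqref{idL} and \eqref{idR} are not consequences of the remaining four-variable identities. The only points requiring care --- the nearest thing to an obstacle --- are the routine bookkeeping reductions: verifying that omitting \eqref{idL} \emph{up to equivalence} is exactly what places $\mathcal{B}$ inside the set $\mathcal{S}$ (whose definition already excludes \eqref{idL} together with its equivalents), and noting that a basis may be assumed to contain no trivial identities without loss of generality. Once these are in place, the corollary is an immediate formal consequence of the two propositions.
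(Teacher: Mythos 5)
Your proposal is correct and takes essentially the same route as the paper: the paper states this corollary as an immediate consequence of Propositions~\ref{proposition:idL_not_consequence} and~\ref{proposition:idR_not_consequence}, and your argument simply makes explicit the routine steps (discarding trivial identities, noting that any four-variable basis avoiding \eqref{idL} up to equivalence sits inside $\mathcal{S}$, and invoking monotonicity of the consequence relation) that the paper leaves to the reader.
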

	
	Furthermore, since \eqref{idM} is not a consequence of \eqref{idL} and \eqref{idR}, any basis for $\mathbf{V}_{\hypo}$ with only identities over an alphabet with four variables must contain at least three identities, one of which must be either \eqref{idM}, an equivalent identity, or an identity of which \eqref{idM} is a consequence of.	

	
	\section{Acknowledgment}
	\label{section:acknowledgment}
	

    The authors thank the anonymous referee for their careful reading of the paper and many helpful comments, in particular, for the suggestion of Corollary~\ref{corollary:varietal_join}.

	

\providecommand{\bysame}{\leavevmode\hbox to3em{\hrulefill}\thinspace}
\providecommand{\MR}{\relax\ifhmode\unskip\space\fi MR }
\providecommand{\MRhref}[2]{%
	\href{http://www.ams.org/mathscinet-getitem?mr=#1}{#2}
}
\providecommand{\href}[2]{#2}

\end{document}